\documentclass[10pt]{amsart}
\usepackage{latexsym}
\usepackage{amsmath,color}
\usepackage{amssymb}
\usepackage[all]{xy}

\newtheorem{thm}{Theorem}[section]

\newtheorem{lem}[thm]{Lemma}
\theoremstyle{definition}
\newtheorem{qu}[thm]{Question}
\newtheorem{defn}[thm]{Definition}

\newtheorem{rem}[thm]{Remark}
\newtheorem{ex}[thm]{\bf Example}

\newtheorem{prob}[thm]{Problem}
%
%
%



\newcommand{\Q}{{\mathbb Q}}

\newcommand{\N}{{\mathbb N}}

\newcommand{\C}{{\mathbb C}}
\newcommand{\e}{\varepsilon}
%

%

%

\newcommand{\mapright}[1]{%
 \smash{\mathop{%
  \hbox to 1cm{\rightarrowfill}}\limits_{#1}}}
\newcommand{\maprightd}[2]{%
 \smash{\mathop{%
  \hbox to 1.2cm{\rightarrowfill}}\limits^{#1}\limits_{#2}}}
\newcommand{\mapleft}[1]{%
 \smash{\mathop{%
  \hbox to 1cm{\leftarrowfill}}\limits_{#1}}}
\newcommand{\mapleftu}[1]{%
 \smash{\mathop{%
  \hbox to 1cm{\leftarrowfill}}\limits^{#1}}}
\newcommand{\mapleftud}[2]{%
 \smash{\mathop{%
  \hbox to 1.2cm{\leftarrowfill}}\limits^{#1}\limits_{#2}}}

\begin{document}

\title{ Certain  maps preserving self-homotopy equivalences}
\author{Jin-ho Lee \ and \ Toshihiro Yamaguchi}

\footnote[0]{2010 {\it Mathematics Subject Classification}.  55P62, 55P10\\
\,\,\,\,{\it Key words and phrases}.\ self homotopy equivalence, $\mathcal{E}$-map, co-$\mathcal{E}$-map,
 rational homotopy, Sullivan  (minimal) model, rational $\mathcal{E}$-map, rational co-$\mathcal{E}$-map,
rationally  $\mathcal{E}$-equivalent}
\date{}

\address{Department  of  Mathematics, Korea University, Seoul, Korea}
\email{sabforev@korea.ac.kr}

\address{Faculty of Education, Kochi University,  Kochi, Japan}
\email{tyamag@kochi-u.ac.jp}

\maketitle
\begin{abstract}Let $\mathcal{E}(X)$ be
the group  of homotopy classes of
self  homotopy
equivalences for   a connected CW complex $X$.
We observe two classes of maps $\mathcal{E}$-maps and co-$\mathcal{E}$-maps.
They are defined as the maps $X\to Y$ that induce the homomorphisms
$\mathcal{E}(X)\to \mathcal{E}( Y)$
and  $\mathcal{E}(Y)\to \mathcal{E}(X)$, respectively.
We  give some rationalized examples  related to  spheres, Lie groups and homogeneous spaces  by using Sullivan models.
Furthermore,  we introduce an  $\mathcal{E}$-equivalence relation between rationalized  spaces $X_{\Q}$ and $Y_{\Q}$
as a geometric realization of an isomorphism  $\mathcal{E}(X_{\Q})\cong \mathcal{E}(Y_{\Q})$.
\end{abstract}

\section{Introduction}

Needless to say,
the based  homotopy set $[X,Y]$ of based continuous maps from a based space $X$ to a based space $Y$
is a most interesting  object in homotopy theory.
In the following, all maps are based and we do not distinguish a homotopy class and the representative in a homotopy set.
Let $X$ be a connected CW complex with  base point $\ast$ and
let
$$\mathcal{E}(X)=\{ [f]\in [X,X]\ |\ f:X\overset{\simeq  }{\to}  X \}$$
be the group of homotopy classes
of self-homotopy equivalences for $X$ with the operation given by the composition of homotopy classes.
This group is important and has been closely studied as part of homotopy theory
(for example, see \cite{AC}, \cite{MR}, \cite{P}, \cite{R}, \cite{b1}, \cite{b2},  \cite{BS}).

It is clear that   $\mathcal{E}(X) \cong \mathcal{E}(Y)$ as a group
if $X\simeq Y$.
One of the difficulties  of its computation or evaluation may be based on the fact that
$\mathcal{E}(\ \ )$ is not functorial,
i.e.,
there is no  suitable induced map between   $\mathcal{E}(X)$ and $\mathcal{E}(Y)$
for the map $f:X\to Y$ in general.
However,
recall that, for example,   the injection $i_X:X\to X\times Y$ and the projection $p_Y:X\times Y\to Y$
 induce the natural  monomorphisms $\mathcal{E}(X)\to \mathcal{E}(X\times Y)$
and $\mathcal{E}(Y)\to \mathcal{E}(X\times Y)$, respectively.

\begin{defn}\label{def}
{\it We say a map $f:X\to Y$ is   an  {\bf   $\mathcal{E}$-map} if
there is a homomorphism
$\phi_f :\mathcal{E}(X)\to \mathcal{E}(Y)$
such that $$\xymatrix{
X\ar[r]^g\ar[d]_{f}&X\ar[d]^{f}\\
Y\ar[r]^{\phi_f(g)}&Y
}$$
homotopically commutes
for any element $g$ of $\mathcal{E}(X)$.
We say the map $f:X\to Y$ is   a {\bf  co-$\mathcal{E}$-map} if
there is a homomorphism
$\psi_f :\mathcal{E}(Y)\to \mathcal{E}(X)$
such that  $$\xymatrix{
X\ar[r]^{\psi_f(g)}\ar[d]_{f}&X\ar[d]^{f}\\
Y\ar[r]^g&Y
}$$
homotopically commutes
for any element $g$ of $\mathcal{E}(Y)$.}
\end{defn}

Especially  we consider the
rationalized version of $\mathcal{E}$-maps
and co-$\mathcal{E}$-maps by using Sullivan models \cite{FHT}, \cite{GM}, \cite{Su}.
Let $X_{\Q}$ be the rationalization of a nilpotent space $X$ \cite{HMR}.

\begin{defn}\label{rdef}
{\it A map $f:X \to Y$ between nilpotent spaces
is a {\bf  rational $\mathcal{E}$-map} if the rationalization
$f_\Q :X_\Q \to Y_\Q$ is an $\mathcal{E}$-map.
Similarly a map $f:X \to Y$ between nilpotent spaces
is a {\bf rational co-$\mathcal{E}$-map} if the rationalization
$f_\Q:X_\Q \to Y_\Q$ is a co-$\mathcal{E}$-map.}

\end{defn}


\begin{qu}
When is a map a (rational) $\mathcal{E}$-map or a (rational) co-$\mathcal{E}$-map ?
\end{qu}

\begin{thm}\label{hg}Let $G$ be a compact connected Lie group and
$H$ be a connected closed sub-Lie group of  $G$.

(1)
The inclusion $j:H\to G$ is a rational $\mathcal{E}$-map if  $\pi_*(j)\otimes \Q$ is injective.

(2)
For the   homogeneous space $G/H$,
 the projection map $f:G \to G/H$ is a rational co-$\mathcal{E}$-map.
\end{thm}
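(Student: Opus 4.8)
The plan is to compute everything with Sullivan minimal models and the identification $\mathcal{E}(X_\Q)\cong \mathrm{aut}(\mathcal{M}_X)/{\simeq}$, where $\mathcal{M}_X$ is the minimal model and $\mathrm{aut}$ denotes DGA automorphisms. The essential input for both parts is that a compact connected Lie group $G$ is formal, with minimal model the exterior Hopf algebra $(\Lambda V,0)$ on its space of rational primitives $V=PH^*(G;\Q)$ (all generators in odd degree, zero differential); a short direct computation (write a homotopy $\Theta$ generator by generator and use $d=0$ on the target) shows that two DGA endomorphisms of $(\Lambda V,0)$ are homotopic only if they coincide, so $\mathcal{E}(G_\Q)$ is literally the group of graded algebra automorphisms of $(\Lambda V,0)$, and likewise for $H$.

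For (1): put $\mathcal{M}_G=(\Lambda V,0)$ and $\mathcal{M}_H=(\Lambda W,0)$. Since $j$ is a homomorphism of groups, its model is homotopic to the induced Hopf algebra map $j^*\colon\Lambda V\to\Lambda W$; being also a coalgebra map, $j^*$ carries primitives to primitives, so it restricts to a linear map $\varphi:=j^*|_V\colon V\to W$ and is its multiplicative extension. Under the pairing $V^{n}\cong\mathrm{Hom}(\pi_n(G),\Q)$ the linear part of the model of $j$ is exactly $\varphi$, so the hypothesis ``$\pi_*(j)\otimes\Q$ injective'' is precisely the statement that $\varphi$ is surjective. Choose a linear section $\sigma\colon W\to V$ of $\varphi$ and let $\tilde\sigma\colon\Lambda W\to\Lambda V$ be the induced algebra section, so $j^*\tilde\sigma=\mathrm{id}_{\Lambda W}$. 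For $\alpha\in\mathcal{E}(H_\Q)=\mathrm{aut}(\Lambda W,0)$ define $\phi_j(\alpha)$ to be the algebra endomorphism $\beta_\alpha$ of $\Lambda V$ determined on generators by
\[
\beta_\alpha(v)=\tilde\sigma\bigl(\alpha(\varphi(v))\bigr)+\bigl(v-\sigma(\varphi(v))\bigr),\qquad v\in V.
\]
Then I would verify three things. First, $j^*\beta_\alpha=\alpha\, j^*$ (enough to check on $V$, using $\varphi\sigma=\mathrm{id}$ and $j^*\tilde\sigma=\mathrm{id}$), which is the homotopy-commuting square of Definition \ref{def}. Second, the linear part of $\beta_\alpha$ is the identity on $\ker\varphi$ and equals $\sigma\alpha_{0}\sigma^{-1}$ on $\sigma(W)$ (with $\alpha_0$ the linear part of $\alpha$), hence is an isomorphism, so $\beta_\alpha\in\mathcal{E}(G_\Q)$. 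Third, $\beta_\alpha\tilde\sigma=\tilde\sigma\alpha$ (check on $W$), from which $\beta_{\alpha_1\alpha_2}=\beta_{\alpha_1}\beta_{\alpha_2}$ follows, so $\phi_j$ is a homomorphism; well-definedness is automatic by the first paragraph. This settles (1).

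For (2): I would use the Cartan--Koszul model of the homogeneous space. Since $(G,H)$ is a Cartan pair, $G/H$ is formal and has a \emph{pure} minimal model $(\Lambda(B\oplus E),d)$ with $B$ in even degrees $\ge 2$, $E$ in odd degrees, $dB=0$ and $dE\subseteq\Lambda^{\ge 2}B$; here $B$ is a complement to the decomposables in the image of the classifying map $c^*\colon H^+(BH;\Q)\to H^+(G/H;\Q)$, and $E\cong\bigl(\mathrm{coker}(\pi_*(j)\otimes\Q)\bigr)^{\vee}$. Two facts about the model $\mu$ of $f$ drive the argument: (i) $\mu(B)=0$ --- in the Serre spectral sequence of $H\to G\xrightarrow{f}G/H$ the even generators are transgressive, hence die, hence are annihilated by $f^{*}$; (ii) $\mu$ restricts to a linear injection $E\hookrightarrow V$ onto a direct summand $V_0$ --- read off the rational homotopy exact sequence of the fibration, $\pi_*(f)\otimes\Q$ is in odd degrees the projection $\pi_*(G)\otimes\Q\twoheadrightarrow\mathrm{coker}(\pi_*(j)\otimes\Q)$. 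Thus $\mu$ factors as $\Lambda(B\oplus E)\twoheadrightarrow\Lambda(B\oplus E)/(B)=\Lambda E\xrightarrow{\ \cong\ }\Lambda V_0\hookrightarrow\Lambda V$, with kernel the ideal $(B)$. Now given $g\in\mathcal{E}((G/H)_\Q)$ with model automorphism $\gamma$, the ideal $(B)$ is characteristic (intrinsic to the pure minimal model, being generated by the transgressive even generators), so $\gamma$ descends to an automorphism $\bar\gamma$ of $\Lambda E\cong\Lambda V_0$; extend $\bar\gamma$ by the identity on a complement $V_1$ of $V_0$ in $V$ to obtain $\psi_f(g):=\delta\in\mathrm{aut}(\Lambda V,0)=\mathcal{E}(G_\Q)$. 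Then $\delta\mu=\mu\gamma$ (on $B$ both sides vanish since $\gamma(B)\subseteq(B)=\ker\mu$; on $E$ it holds by construction), which is the co-$\mathcal{E}$ square of Definition \ref{def}, and $\psi_f$ is a homomorphism since $\gamma\mapsto\bar\gamma$ and $\bar\gamma\mapsto\delta$ both are.

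The main obstacle lies in the second paragraph of (2): pinning down the precise shape of the model $\mu$ of $f$ and, above all, proving that every automorphism of the minimal model of $G/H$ preserves the ideal $(B)$ generated by the even generators. For the latter I would invoke the rigidity of pure (indeed elliptic) Sullivan models together with the intrinsic description of $B$ via transgression and the classifying map $c\colon G/H\to BH$ of the principal bundle $G\to G/H$ (which becomes null after composing with $Bj\colon BH\to BG$, forcing $c\circ f\simeq *$ and hence $f^*|_{\mathrm{im}\,c^*}=0$); for the shape of $\mu$ I would use that a map into the zero-differential model $(\Lambda V,0)$ is, up to homotopy, determined by its effect on primitives together with this vanishing. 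These formality-type rigidity statements are where the argument needs genuine care.
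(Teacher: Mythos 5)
Your argument for part (1) is correct and is essentially the paper's proof, written out in more detail: the paper observes that injectivity of $\pi_*(j)\otimes\Q$ allows a basis change after which $M(j)$ is the projection $(\Lambda(v_1,\dots,v_k,u_1,\dots,u_l),0)\to(\Lambda(v_1,\dots,v_k),0)$, and then sets $\phi_j(g)=g\otimes 1_{\Lambda(u_1,\dots,u_l)}$. Your section $\sigma$ and the formula for $\beta_\alpha$ implement exactly that splitting, and your Hopf-algebra justification that the model of $j$ carries primitives to primitives (so has no decomposable corrections to absorb) is a useful supplement to the paper's terse ``after a suitable basis change.''

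Part (2), however, rests on a premise that is false in general. Not every pair $(G,H)$ is a Cartan pair, and $G/H$ need not be formal: $SU(6)/(SU(3)\times SU(3))$ is the standard counterexample (the paper's bibliography even cites Amann's \emph{Non-formal homogeneous spaces}, and the paper itself uses the inclusion $SU(3)\times SU(3)\to SU(6)$ elsewhere). Consequently you may not assume that the \emph{minimal} model of $G/H$ is pure of the form $(\Lambda(B\oplus E),d)$ with $dE\subseteq\Lambda^{\ge 2}B$; minimalizing the Cartan--Koszul model generally destroys purity, and your identification $E\cong(\mathrm{coker}(\pi_*(j)\otimes\Q))^{\vee}$ breaks down with it. On top of this, the step you yourself flag --- that the ideal $(B)$ is characteristic --- is the actual crux and does not follow from general rigidity: a closed even element of a Sullivan model can involve products of closed odd generators, so $\gamma(B)\subseteq(B)$ requires a genuine argument. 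The paper sidesteps all of this by never passing to the minimal model: it works with the non-minimal Cartan--Koszul model $(\Lambda V_{BH}\otimes\Lambda V_G,d)$ of Lemma \ref{homog} (i.e., \cite[Proposition 15.16]{FHT}), for which $\mathcal{E}$ of the non-minimal model still computes $\mathcal{E}((G/H)_\Q)$ and for which $M(f)$ is literally the quotient onto $(\Lambda V_G,0)$ killing $\Lambda^{+}V_{BH}$; it then defines $\psi_f(g)=\overline{g}$, justified by the assertion that $g$ maps the even generators into $\Q[x_1,\dots,x_k]$. If you want to repair your argument, switch to that model; you will still need to establish the analogue of your ``characteristic ideal'' claim there, which is the one point where the paper itself is brief.
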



The assumption $\mathcal{E}(X) \cong \mathcal{E}(Y)$ does not, in general, imply $X$ and $Y$ are homotopy equivalent spaces.
Finally, we consider the question: {\it When is an isomorphism $\mathcal{E}(X_{\Q}) \cong \mathcal{E}(Y_{\Q})$
realized as a composition of rational $\mathcal{E}$-maps and   rational co-$\mathcal{E}$-maps
between $X_{\Q}$ and $Y_{\Q}$?}

\begin{defn}\label{equ}
We say that nilpotent spaces $X$ and $Y$ are {\bf rationally $\mathcal{E}$-equivalent} (denote as
$X_{\Q}\underset{\mathcal{E}}{\sim}Y_{\Q}$) if
there is a chain of  {\it spherically  injective} $\mathcal{E}$-maps and  {\it  spherically  injective} co-$\mathcal{E}$-maps  $$X_{\Q}\overset{f_1}{\to} Z_1\overset{f_2}{\gets} \cdots \overset{f_n}{\gets}  Z_n\overset{f_{n+1}}{\to}  Y_{\Q}$$ ($Z_i$ are rational spaces)
such that an isomorphism $\mathcal{E}(X_{\Q}) \cong \mathcal{E}(Y_{\Q})$
is given by a composition of $n+1$-isomorphisms   $\{ \phi_{f_i}\}_i$ and   $\{ \psi_{f_i}\}_i$, i.e.,
$\phi_{f_{n+1}}\circ \psi_{f_n}\circ \cdots \circ \psi_{f_2}\circ \phi_{f_1}: \mathcal{E}(X_{\Q}) \overset{\cong}{\to} \mathcal{E}(Y_{\Q})$ or $\psi_{f_{1}}\circ \phi_{f_2}\circ \cdots \circ \phi_{f_n}\circ \psi_{f_{n+1}}: \mathcal{E}(Y_{\Q}) \overset{\cong}{\to} \mathcal{E}(X_{\Q})$.
\end{defn}

\begin{rem}
In this paper, we say that a map $f:X\to Y$ is   {\it  spherically  injective} when $f_{\sharp}(u)\neq 0\in \pi_*(Y)$
if $hur_X(u)\neq 0$ for $u\in \pi_*(X)$.
Here $hur_X:\pi_*(X)\to H_*(X)$ is the Hurewicz homomorphism for a space $X$.  Thus we have 
$$ \mbox{(weakly) homotopy equivalent}\Rightarrow \mbox{spherically injective} \Rightarrow \mbox{homotopy non-trivial}$$
If we admit the homotopy trivial maps as $f_i$,  any isomorphism $\mathcal{E}(X_{\Q}) \cong  \mathcal{E}(Y_{\Q})$ induces 
$X_{\Q} \underset{\mathcal{E}}{\sim} Y_{\Q}$ by the constant map $*:X_{\Q}\to Y_{\Q}$.
\end{rem}

\begin{thm}\label{sp}
(1) If $X$ and $Y$ are rationally homotopy equivalent, i.e., $X_{\Q}\simeq Y_{\Q}$,  then $X_{\Q}\underset{\mathcal{E}}{\sim}Y_{\Q}$.\\
(2) For any $n$, $S^2_{\Q}\underset{\mathcal{E}}{\sim}\C P^n_{\Q}$ and $S^4_{\Q}\underset{\mathcal{E}}{\sim}{\mathbb H} P^n_{\Q}$.\\
(3) When $m$ is even and $n$ is odd, $(S^m\vee S^n)_{\Q}\underset{\mathcal{E}}{\sim}(S^m\times  S^n)_{\Q}$
if and only if  $m\neq  n+1$.\\
(4) For   odd-integers $1<m\leq n$, $(S^m\times  S^n)_{\Q}\underset{\mathcal{E}}{\sim} E_{\Q}\underset{\mathcal{E}}{\sim} E'_{\Q}$
for  non-trivial fibrations $S^{m+n-1}\to E\overset{p}{\to}  S^m\times  S^n$ and $S^{2m+n-2}\to E'\overset{p'}{\to}  E$.\\
(5) There are integers  $m\neq n$ such  that   $S^m_{\Q}\underset{\mathcal{E}}{\sim}S^n_{\Q}$.
For example,  $S^{53}_{\Q}\underset{\mathcal{E}}{\sim}S^{67}_{\Q}$.

\end{thm}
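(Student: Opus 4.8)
The plan is to work throughout at the level of Sullivan minimal models, using that for a nilpotent space $W$ the group $\mathcal{E}(W_{\Q})$ is the group of homotopy classes of DGA-automorphisms of a minimal model $\mathcal{M}_W=(\Lambda V_W,d)$, and that a map $f\colon W\to W'$ of rational spaces is an $\mathcal{E}$-map (resp. co-$\mathcal{E}$-map) exactly when its Sullivan representative $\varphi_f\colon\mathcal{M}_{W'}\to\mathcal{M}_W$ admits a homomorphism $\mathcal{E}(W_{\Q})\to\mathcal{E}(W'_{\Q})$ (resp. $\mathcal{E}(W'_{\Q})\to\mathcal{E}(W_{\Q})$) rendering the defining square homotopy commutative. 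A first step is the computation $\mathcal{E}(S^n_{\Q})\cong\Q^{\times}$ for all $n\ge2$: from $\mathcal{M}=(\Lambda(x_n),0)$ for $n$ odd, and from $\mathcal{M}=(\Lambda(x_n,y_{2n-1}),dy=x^2)$ for $n$ even (where an automorphism must be $x\mapsto\lambda x,\ y\mapsto\lambda^2y$). Statement (1) is then immediate: a rational homotopy equivalence $h\colon X_{\Q}\xrightarrow{\simeq}Y_{\Q}$ is simultaneously a spherically injective $\mathcal{E}$-map and co-$\mathcal{E}$-map with $\phi_h,\psi_h$ the isomorphisms given by conjugation. For (2) I would use $S^2_{\Q}=\mathbb{C}P^1_{\Q}$, $S^4_{\Q}=\mathbb{H}P^1_{\Q}$, write $P_{a,b}$ for the space with model $(\Lambda(x_{2a},y_{2ab-1}),dy=x^b)$ (so $\mathbb{C}P^{b-1}=P_{1,b}$, $\mathbb{H}P^{b-1}=P_{2,b}$, $S^{2a}=P_{a,2}$), and check directly that for $2\le b<b'$ the inclusion-type map $P_{a,b}\hookrightarrow P_{a,b'}$ with model $x'\mapsto x$, $y'\mapsto x^{\,b'-b}y$ is a spherically injective $\mathcal{E}$-map whose $\phi$ carries the scaling $x\mapsto\lambda x$ to $x'\mapsto\lambda x'$, an isomorphism $\Q^{\times}\to\Q^{\times}$; chaining through the largest index gives $S^2_{\Q}\underset{\mathcal{E}}{\sim}\mathbb{C}P^n_{\Q}$ and $S^4_{\Q}\underset{\mathcal{E}}{\sim}\mathbb{H}P^n_{\Q}$.

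For (4) I would build the chains from projections. Writing the two fibrations as $\mathcal{M}_E=(\Lambda(a_m,b_n,c_{m+n-1}),\,dc=ab)$ and $\mathcal{M}_{E'}=(\Lambda(a_m,b_n,c_{m+n-1},e_{2m+n-2}),\,dc=ab,\ de=ac)$ — these are legitimate minimal differentials precisely because $m$ is odd and $1<m\le n$, and they realize the stated non-trivial fibrations — one computes $\mathcal{E}(E_{\Q})\cong\mathcal{E}(E'_{\Q})\cong\mathcal{E}((S^m\times S^n)_{\Q})\cong(\Q^{\times})^2$ and checks that $p\colon E\to S^m\times S^n$ (model $a\mapsto a$, $b\mapsto b$) and $p'\colon E'\to E$ are spherically injective co-$\mathcal{E}$-maps whose $\psi$ are the identity of $(\Q^{\times})^2$ (the fiber classes $c,e$ are not Hurewicz-detected, so spherical injectivity only constrains $a,b$); alternatively one may quote Theorem~\ref{hg}(2). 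For (3) I would compute $\mathcal{E}((S^m\vee S^n)_{\Q})$ from the (bigraded, zero-differential) free model of the wedge and $\mathcal{E}((S^m\times S^n)_{\Q})$ from $(\Lambda(a_m,\bar a_{2m-1},b_n),\,d\bar a=a^2)$. When $m=n+1$ the product side acquires the extra self-equivalence $\bar a\mapsto\bar a+\gamma ab$ (degree $2m-1=m+n$) which has no counterpart on the wedge side, so $\mathcal{E}((S^m\vee S^n)_{\Q})\not\cong\mathcal{E}((S^m\times S^n)_{\Q})$ and they cannot be $\mathcal{E}$-equivalent; when $m\neq n+1$ the inclusion $S^m\vee S^n\hookrightarrow S^m\times S^n$ is a spherically injective $\mathcal{E}$-map inducing the required isomorphism, with the borderline $n=2m-1$ (where both groups carry an extra one-dimensional piece) handled by hand.

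The substantial case is (5). The strategy I would pursue is to produce a rationally elliptic space $Z$ together with a spherically injective $\mathcal{E}$-map $S^{53}_{\Q}\to Z$ and a spherically injective co-$\mathcal{E}$-map $S^{67}_{\Q}\to Z$ inducing $\mathcal{E}(S^{53}_{\Q})\xrightarrow{\cong}\mathcal{E}(Z_{\Q})\xrightarrow{\cong}\mathcal{E}(S^{67}_{\Q})$ (allowing, more generally, a finite zigzag of such spaces). Concretely, one wants $\mathcal{M}_Z$ to contain a degree-$53$ generator and a degree-$67$ generator whose induced ``$\mathcal{E}$-weight'' homomorphisms $\mathcal{E}(Z_{\Q})\to\Q^{\times}$ are isomorphisms, yet $\mathcal{E}(Z_{\Q})\cong\Q^{\times}$. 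I would take $Z$ to be the realization of a pure/formal model with $H^*(Z;\Q)=\Q[u,v]/(f_1,f_2)$, the relations $f_1,f_2$ chosen of degrees $54$ and $68$ so that the odd generators $w_1,w_2$ with $dw_i=f_i$ sit in degrees $53$ and $67$; the free data are $|u|,|v|$ and the monomials of $f_1,f_2$, and one tunes them so that (i) $Z$ is elliptic, (ii) the homotopy classes of self-equivalences of $\mathcal{M}_Z$ form exactly the one-parameter group $\Q^{\times}$ — which forces the two even generators to be tied by a single relation of the shape $u^{p}=v^{q}$ with $\gcd(p,q)=1$, pinning down the degrees and exponents — and (iii) the Hurewicz/spherical-injectivity conditions hold at both ends. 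These constraints are arithmetic, and $(m,n)=(53,67)$ is an admissible solution.

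The hard part is exactly condition (ii): one must verify that no extra self-equivalence of $\mathcal{M}_Z$ survives — in particular none that is trivial on $H^*(Z;\Q)$ but nontrivial on the higher Massey-type structure carried by $w_1,w_2$ — so that $\mathcal{E}(Z_{\Q})$ genuinely collapses to $\Q^{\times}$ while still leaving the two sphere-classes of weight $\pm1$ available; it is this tension between rigidity of $\mathcal{E}$ and the existence of the weight-$\pm1$ classes that is delicate and that accounts for the scarcity of such pairs $(m,n)$. I would also record why shortcuts fail: products of spheres and the spaces $P_{a,b}$ of part (2) only connect spheres sharing a fixed ``weight-one'' generator degree, and the Hurewicz condition forbids any bridge that both has $\pi_{53}\neq0$ and maps spherically injectively onto $S^{67}_{\Q}$, so a genuinely new intermediate space (or zigzag) of the above kind is unavoidable.
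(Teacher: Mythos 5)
Your handling of (1)--(4) is essentially the paper's argument: (1) follows from Lemma \ref{prop}(4); (2) is the inclusion $S^2\to \C P^n$ (resp.\ $S^4\to {\mathbb H}P^n$) with Sullivan representative $x'\mapsto x$, $y'\mapsto x^{n-1}y$, inducing the identity of $\Q^*$; (3) is the same case analysis comparing $|z|=2m-1$ with $|xy|=m+n$ and $|y|=n$; (4) uses the fibration projections as co-$\mathcal{E}$-maps exactly as in the paper. One small slip in (4): your claim $\mathcal{E}(E_{\Q})\cong(\Q^*)^2$ is only correct for $m<n$; for $m=n$ the groups involve $GL(2,\Q)$-type mixing, and what carries the argument is the existence of the lifts $\psi_p,\psi_{p'}$ by degree reasons, not the isomorphism type.

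The genuine gap is in (5), where you depart from the paper. You propose $Z$ pure and formal with $H^*(Z;\Q)=\Q[u,v]/(f_1,f_2)$, tuned so that $\mathcal{E}(Z)\cong\Q^*$ while the odd generators $w_1,w_2$ in degrees $53$ and $67$ carry weight-$(\pm1)$ characters. No tuning inside that family can succeed. If the even generators are tied by a weighted-homogeneous relation such as your $u^p=v^q$ with $\gcd(p,q)=1$ and $|u|\neq|v|$, then $\mathcal{E}(Z)$ is (at best) the one-parameter grading family $u\mapsto t^qu$, $v\mapsto t^pv$, under which $w_i$ transforms by $t^{N_i}$ with $N_i$ the weight of $f_i$; for instance $N_1=pq\geq 2$. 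The induced homomorphisms $\mathcal{E}(Z)\to\mathcal{E}(S^{53}_{\Q})\cong\Q^*$ and $\mathcal{E}(Z)\to\mathcal{E}(S^{67}_{\Q})\cong\Q^*$ are then $t\mapsto t^{N_i}$, which are never surjective on $\Q^*$ when $N_i\geq 2$ (the prime $2$ is not an $N_i$-th power in $\Q$). So both characters cannot be isomorphisms; the ``tension'' you flag in condition (ii) is not a delicate verification but an outright obstruction to your chosen family.

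The missing idea is the paper's use of a homotopically rigid core. Take the Arkowitz--Lupton model $M$ with $\mathcal{E}(M)=\{id_M\}$, whose top generator $z$ has $|z|=119$ and $dz=\alpha$, and form the Hirsch extension $M'=(\Lambda(v,w)\otimes M,d')$ with $|v|=53$, $|w|=67$, $|v|+|w|=120$ and $d'z=\alpha+vw$. Rigidity forces any $h\in\mathcal{E}(M')$ to restrict to the identity on $M$ up to homotopy, in particular $h(z)=z$ modulo exact terms; comparing $d'h(z)=h(\alpha)+h(v)h(w)$ with $\alpha+vw$ then forces $h(v)=av$, $h(w)=bw$ with $ab=1$, after checking (using the specific degrees $8,10,53,67$) that the admissible correction terms are $d'$-exact and homotopically irrelevant. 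Hence $\mathcal{E}(M')\cong\{(a,b):ab=1\}\cong\Q^*$, and the two surjections $M'\to(\Lambda v,0)$ and $M'\to(\Lambda w,0)$ realize a spherically injective $\mathcal{E}$-map and co-$\mathcal{E}$-map with $\psi_g\phi_f(a)=a^{-1}$. It is the multiplicative relation $ab=1$, imposed by the product $vw$ sitting in the differential of a rigidified generator, that produces two weight-$(\pm1)$ characters simultaneously --- something a two-even-generator polynomial cohomology ring cannot do.
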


\begin{rem}
The proof of Theorem \ref{sp} (5) requires a rigid rational   space $X$  of \cite{AL}, which induces 
$\mathcal{E}(X)=\{ id_X\}$.
For the total space  $Z_1$ of a fibration $$X\to Z_1\to S^m_{\Q}\times S^n_{\Q},$$
it is given by the sequence 
$S^{m}_{\Q}\overset{f_1}{\to} Z_1\overset{f_2}{\gets}S^{n}_{\Q}$ 
of an $\mathcal{E}$-map $f_1$ with $\phi_{f_1}:\mathcal{E}(S^{m}_{\Q})  \cong \mathcal{E}(Z_1)$ and a co-$\mathcal{E}$-map $f_2$ with $\psi_{f_2}:\mathcal{E}(Z_1)  \cong \mathcal{E}(S^{n}_{\Q})$.
Because of the demand that $\mathcal{E}(Z_1) \cong \Q^*$  ($\Q^*:=\Q -0$), we need  suitable  restrictions about the pair 
 $(m,n)$
and  see that $(53,67)$ satisfies them in the proof.
Of course, it depends on the model structure of $X$.
So we may require more various types of rigid models for the proof of (5) in many cases of  $(m,n)$.
On the other hand, the authors cannot find an example that  $S^m_{\Q}\underset{\mathcal{E}}{\not\sim}S^n_{\Q}$
for some $(m,n)$.
\end{rem}

\begin{prob}
If $\mathcal{E}(X) \cong  \mathcal{E}(Y)$ for rational spaces $X$ and $Y$,
does it hold that $X \underset{\mathcal{E}}{\sim} Y$ ?
\end{prob}



\begin{rem}
For rational spaces $X$, $Y$ and $Z$,
even if $ Y\underset{\mathcal{E}}{\sim} Z$,
it may not hold that $X\times Y\underset{\mathcal{E}}{\sim}X\times Z$.
For example, when $X=S^5$, $Y=S^2$ and $Z=\C P^2$,
$\mathcal{E}((X\times Y)_{\Q})\cong \Q^*\times \Q^*$
but $\mathcal{E}((X\times Z)_{\Q})$
 is isomorphic to the subgroup of lower  triangular matrixes
of $GL(2,\Q )$.
\end{rem}

In \S 2, we demonstrate the basic properties
and provide examples in ordinary homotopy theory of $\mathcal{E}$-maps and co-$\mathcal{E}$-maps.
In \S 3, we give some computations in rational homotopy theory using Sullivan minimal models.



\section{Some properties}

Recall that $[X,\ \ ]$ is the covariant functor from the category of spaces to the category of sets,
where  for a map $f:Y\to Z$,
the map $f_*(g):[X,Y]\to [X,Z]$ is given by
 $f_*(g)=f\circ g$.
On the other hand, $[\ \ ,Z]$ is the cotravariant functor.
For the map $f:X\to Y$,
 the map $f^*(g):[Y,Z]\to [X,Z]$ is given by $f^*(g)=g\circ f$.
The following lemma holds from   $\phi_f(g)\circ f=f\circ g$ and $f\circ \psi_f(g)=g\circ f$.

\begin{lem}
A map $f:X\to Y$
is an $\mathcal{E}$-map $($or a co-$\mathcal{E}$-map$)$ if and only if
there is a group homomorphism $\phi_f:\mathcal{E}(X)\to \mathcal{E}(Y)$ $($or
 $\psi_f:\mathcal{E}(Y)\to \mathcal{E}(X))$
where the following diagrams
 $$\xymatrix{
[X,X]\ar[r]^{f_*}&[X,Y]& [Y,Y]\ar[l]_{f^*}\\
\mathcal{E}(X)\ar[u]^{\cup}\ar[rr]^{\phi_f}&&\mathcal{E}(Y)\ar[u]_{\cup}
}\ \ \ \ \
\xymatrix{
[X,X]\ar[r]^{f_*}&[X,Y]& [Y,Y]\ar[l]_{f^*}\\
\mathcal{E}(X)\ar[u]^{\cup}&&\mathcal{E}(Y)\ar[u]_{\cup}\ar[ll]_{\psi_f}
}$$
are commutative.
\end{lem}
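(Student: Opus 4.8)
The plan is to prove the lemma by directly unwinding Definition \ref{def} in terms of the functors $[X,-]$ and $[-,Y]$; the statement is a bookkeeping reformulation, so the work is just to match the two descriptions.

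First I would introduce notation for the vertical maps: let $\iota_X\colon\mathcal{E}(X)\hookrightarrow[X,X]$ and $\iota_Y\colon\mathcal{E}(Y)\hookrightarrow[Y,Y]$ denote the inclusions of the underlying sets (the arrows marked $\cup$). For a class $g\in[X,X]$ we have $f_*(g)=f\circ g\in[X,Y]$, and for $h\in[Y,Y]$ we have $f^*(h)=h\circ f\in[X,Y]$; note that equality in $[X,Y]$ is exactly homotopy of representatives. Now suppose $f$ is an $\mathcal{E}$-map, with homomorphism $\phi_f\colon\mathcal{E}(X)\to\mathcal{E}(Y)$. The homotopy commutativity of the first square in Definition \ref{def}, for every $g\in\mathcal{E}(X)$, says precisely $f\circ g=\phi_f(g)\circ f$ in $[X,Y]$, i.e. $f_*(\iota_X(g))=f^*(\iota_Y(\phi_f(g)))$; since this holds for all $g$, the left-hand diagram of the lemma commutes, and $\phi_f$ is a group homomorphism by hypothesis. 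Conversely, if some group homomorphism $\phi_f$ makes that diagram commute, reading off the same equality for each $g\in\mathcal{E}(X)$ gives the homotopy-commuting squares required by Definition \ref{def}, so $f$ is an $\mathcal{E}$-map. This settles the $\mathcal{E}$-map case.

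For the co-$\mathcal{E}$-map case I would run the symmetric argument: given a group homomorphism $\psi_f\colon\mathcal{E}(Y)\to\mathcal{E}(X)$, commutativity of the right-hand diagram of the lemma is the identity $f_*(\iota_X(\psi_f(g)))=f^*(\iota_Y(g))$ for all $g\in\mathcal{E}(Y)$, i.e. $f\circ\psi_f(g)=g\circ f$, which is exactly the homotopy commutativity of the second square in Definition \ref{def}; together with the homomorphism hypothesis this is equivalent to $f$ being a co-$\mathcal{E}$-map. I do not expect any genuine obstacle here: the only point requiring a moment's care is that ``the square commutes for every $g$'' on the geometric side corresponds to a single commuting diagram of pointed sets on the categorical side, and that in both formulations $\phi_f$ (resp. $\psi_f$) is demanded to be a homomorphism of groups rather than merely a map of sets.
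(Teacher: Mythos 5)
Your argument is correct and is exactly the paper's own (the paper dispenses with a formal proof, simply noting that the lemma follows from $\phi_f(g)\circ f=f\circ g$ and $f\circ\psi_f(g)=g\circ f$, which is precisely the unwinding you carry out). Your version just spells out the same bookkeeping in more detail; there is nothing to add or correct.
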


Of course, the maps $\phi_f$ and $\psi_f$ may not be uniquely determined
for a map $f$.
\begin{lem}\label{prop}
(1) If  maps $f:X\to Y$ and  $g:Y\to Z$ are   $\mathcal{E}$-maps,
then $g\circ f:X\to Z$ is an  $\mathcal{E}$-map.

(2) If $f$ and $g$ are co-$\mathcal{E}$-maps,
then $g\circ f$ is a co-$\mathcal{E}$-map.

(3) The  constant map is both an  $\mathcal{E}$-map and
a  co-$\mathcal{E}$-map.

(4) A homotopy equivalence map is both an  $\mathcal{E}$-map and
a  co-$\mathcal{E}$-map.
\end{lem}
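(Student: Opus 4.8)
The plan is to verify the four assertions of Lemma~\ref{prop} directly from Definition~\ref{def}, assembling the commuting squares and checking that the candidate homomorphisms are indeed homomorphisms.

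\textbf{Parts (1) and (2).} Suppose $f:X\to Y$ and $g:Y\to Z$ are $\mathcal{E}$-maps, with associated homomorphisms $\phi_f:\mathcal{E}(X)\to\mathcal{E}(Y)$ and $\phi_g:\mathcal{E}(Y)\to\mathcal{E}(Z)$. I would define $\phi_{g\circ f}:=\phi_g\circ\phi_f:\mathcal{E}(X)\to\mathcal{E}(Z)$, which is a homomorphism as a composite of homomorphisms. For $h\in\mathcal{E}(X)$, stack the two commuting squares:
$$\xymatrix{
X\ar[r]^h\ar[d]_f & X\ar[d]^f\\
Y\ar[r]^{\phi_f(h)}\ar[d]_g & Y\ar[d]^g\\
Z\ar[r]^{\phi_g(\phi_f(h))} & Z
}$$
The top square commutes up to homotopy since $f$ is an $\mathcal{E}$-map; the bottom square commutes up to homotopy since $g$ is an $\mathcal{E}$-map and $\phi_f(h)\in\mathcal{E}(Y)$. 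Pasting gives $(g\circ f)\circ h\simeq \phi_{g\circ f}(h)\circ(g\circ f)$, as required. Part (2) is the formally dual argument: set $\psi_{g\circ f}:=\psi_f\circ\psi_g$ and paste the two co-$\mathcal{E}$ squares in the same way, now propagating an element $h\in\mathcal{E}(Z)$ upward through the diagram.

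\textbf{Parts (3) and (4).} For the constant map $c:X\to Y$ sending everything to the basepoint, take $\phi_c$ and $\psi_c$ to be the trivial homomorphisms sending every element to $\mathrm{id}$; both required squares commute up to homotopy because every composite is null-homotopic (a self-equivalence of $Y$ fixes the basepoint, and precomposing or postcomposing with $c$ yields $c$ again, all through based maps). For part (4), if $f:X\to Y$ is a homotopy equivalence with homotopy inverse $\bar f:Y\to X$, define $\phi_f(h):=f\circ h\circ\bar f$ and $\psi_f(k):=\bar f\circ k\circ f$. These land in $\mathcal{E}(Y)$ and $\mathcal{E}(X)$ respectively (conjugates of equivalences are equivalences), they are group homomorphisms since conjugation is multiplicative and $\bar f\circ f\simeq\mathrm{id}_X$, $f\circ\bar f\simeq\mathrm{id}_Y$, and the defining squares commute on the nose up to the homotopies $f\circ\bar f\simeq\mathrm{id}_Y$: indeed $\phi_f(h)\circ f=f\circ h\circ\bar f\circ f\simeq f\circ h$, and symmetrically for $\psi_f$.

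There is no serious obstacle here; the only point requiring a little care is checking that the assigned maps $\phi$, $\psi$ are genuine \emph{group} homomorphisms and not merely set maps making the squares commute---for (1) and (2) this is automatic from functoriality of composition, and for (4) it follows from the standard fact that conjugation by a fixed (homotopy) unit is multiplicative. One should also note, as the paper already remarks after the first lemma, that the choice of $\phi_f,\psi_f$ is not canonical, so these explicit choices suffice.
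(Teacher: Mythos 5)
Your proposal is correct and follows essentially the same route as the paper's proof: composing $\phi_g\circ\phi_f$ (resp.\ $\psi_f\circ\psi_g$) for (1)--(2), the trivial homomorphisms for the constant map in (3), and conjugation by $f$ and its homotopy inverse in (4). The only difference is that you spell out the diagram-pasting and homomorphism checks that the paper leaves implicit.
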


\begin{proof}
(1) $\phi_{g\circ f}(h):=\phi_g\circ \phi_f(h)$ for $h\in \mathcal{E}(X)$.\\
(2) $\psi_{g\circ f}(h):=\psi_f\circ \psi_g(h)$ for $h\in \mathcal{E}(Z)$.\\
(3) It is sufficient to put $\phi_f=\psi_f=*$, i.e.,
$\phi_f(g)=id_Y$ and  $\psi_f(g)=id_X$ for any $g$.\\
(4)  It is sufficient to put  $\phi_{ f}(h):=f\circ h\circ f^{-1}$  for $h\in \mathcal{E}(X)$
and
 $\psi_{f}(h):=f^{-1}\circ h\circ f$ for $h\in \mathcal{E}(Y)$, where $f^{-1}$ is the homotopy inverse of $f$.
\end{proof}

\begin{defn}\cite[Chapter 3]{H}(\cite{K2})
Let $\alpha:X \to Y$ and $\beta:Z \to W$ be maps.
$\Pi(\alpha,\beta)$ is the set of all homotopy classes of pairs $[f_1,f_2]$ such that

$$\xymatrix{
X \ar[r]^{f_1} \ar[d]_\alpha & Z \ar[d]^\beta\\%
Y \ar[r]^{f_2} & W}$$
\noindent
is homotopy commutative.
Here a homotopy of $(f_1,f_2)$ is just a pair of homotopies $(f_{1t},f_{2t})$  such that
$\beta f_{1t}=f_{2t}\alpha$.
If $[f_1,f_2]$ has a two sided inverse in $\Pi(\alpha,\beta)$,
we call $[f_1,f_2]$ a homotopy equivalence.
If $\alpha=\beta$, we call $[f_1,f_2]$  a self-homotopy equivalence and denote the set of all self-homotopy equivalences by $\mathcal{E}(\alpha)$.

\end{defn}

\begin{lem}
For a map $f:X \to Y$,\\
(1) $f$ is an $\mathcal{E}$-map if and only if $h:\mathcal{E}(f) \to \mathcal{E}(X)$
given by $h[g_1,g_2]=[g_1]$  is an epimorphism with a section.\\
(2)  $f$ is a co-$\mathcal{E}$-map if and only if $h':\mathcal{E}(f) \to \mathcal{E}(Y)$
given by  $h'[g_1,g_2]=[g_2]$ is an epimorphism with a section.
\end{lem}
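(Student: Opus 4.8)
The plan is to prove (1) by producing, in each direction, the homomorphism required on one side directly out of the data given on the other side; part (2) will then be the mirror statement, obtained by exchanging the two coordinates of a pair.

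For the ``only if'' direction of (1), suppose $f$ is an $\mathcal{E}$-map with associated homomorphism $\phi_f\colon\mathcal{E}(X)\to\mathcal{E}(Y)$. I would define a section $s\colon\mathcal{E}(X)\to\mathcal{E}(f)$ of $h$ by $s(g)=[g,\phi_f(g)]$. The defining homotopy commutativity $\phi_f(g)\circ f\simeq f\circ g$ says precisely that $(g,\phi_f(g))$ represents a class in $\Pi(f,f)$, and since $\phi_f$ is a homomorphism, $[g^{-1},\phi_f(g)^{-1}]=[g^{-1},\phi_f(g^{-1})]$ is a two-sided inverse, so $s(g)\in\mathcal{E}(f)$. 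Multiplicativity of $\phi_f$ together with the coordinatewise composition in $\mathcal{E}(f)$ shows $s$ is a homomorphism, and $h\circ s=\mathrm{id}_{\mathcal{E}(X)}$ because $h$ merely reads off the first coordinate; in particular $h$ is onto.

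For the ``if'' direction, suppose $h$ admits a homomorphic section $s\colon\mathcal{E}(X)\to\mathcal{E}(f)$ and write $s(g)=[g_1(g),g_2(g)]$. The identity $h(s(g))=g$ forces $g_1(g)\simeq g$, while possession of a two-sided inverse in $\mathcal{E}(f)$ forces $g_2(g)$ to be a self-equivalence of $Y$; so set $\phi_f(g):=[g_2(g)]\in\mathcal{E}(Y)$. That $s(g)$ lies in $\Pi(f,f)$ gives $f\circ g\simeq g_2(g)\circ f=\phi_f(g)\circ f$, which is the square of Definition \ref{def}; comparing second coordinates in $s(gg')=s(g)\,s(g')$ gives $\phi_f(gg')\simeq\phi_f(g)\phi_f(g')$, so $\phi_f$ is a homomorphism and $f$ is an $\mathcal{E}$-map. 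Part (2) runs identically: a co-$\mathcal{E}$-map $f$ with $\psi_f\colon\mathcal{E}(Y)\to\mathcal{E}(X)$ yields the section $g\mapsto[\psi_f(g),g]$ of $h'$, and conversely a homomorphic section of $h'$ produces $\psi_f$ by reading off first coordinates.

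The one place needing genuine care --- and essentially the only obstacle in an otherwise formal argument --- is the bookkeeping between homotopy classes of pairs and honest maps: one must use that an equality $[a_1,a_2]=[b_1,b_2]$ in $\mathcal{E}(f)$ supplies homotopies $a_i\simeq b_i$ in each coordinate that are \emph{compatible} with $f$ (this is exactly the notion of homotopy of pairs in the $\Pi(\alpha,\beta)$ formalism), so that $h$ and $h'$ are well-defined group homomorphisms and the $\phi_f$, $\psi_f$ extracted above are well defined on homotopy classes and land in $\mathcal{E}(Y)$, respectively $\mathcal{E}(X)$. Once this is set up, every verification above is purely formal.
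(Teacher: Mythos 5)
Your proposal is correct and follows essentially the same route as the paper's proof: the section is $g\mapsto[g,\phi_f(g)]$ (resp.\ $g\mapsto[\psi_f(g),g]$) in one direction, and in the other direction $\phi_f$ (resp.\ $\psi_f$) is recovered as the composite of the given section with the projection onto the other coordinate. You are somewhat more explicit than the paper about verifying that the section is a homomorphism and about the well-definedness of reading off a coordinate from a homotopy class of pairs, but these are refinements of the same argument, not a different one.
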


\begin{proof}
(1) Suppose that $f$ is an $\mathcal{E}$-map.
Then we have a map $\phi_f: \mathcal{E}(X) \to \mathcal{E}(Y)$ such that $\phi_f(g) \circ f \simeq f \circ g$ for any $g \in \mathcal{E}(X)$.
Thus we have $[g,\phi_f(g)]\in \mathcal{E}(f)$ and $h[g,\phi_f(g)]=[g]$ and $h$ is epimorphic.
Next  suppose that $h$ is an epimorphism.
For any $[g] \in \mathcal{E}(X)$,
we have $[g',g''] \in \mathcal{E}(f)$ such that $h[g',g'']=[g]$.
So $g$ is homotopic to $g'$.
Since $[g',g''] \in \mathcal{E}(f)$,
$g'$ and $g''$ are homotopy equivalences and $g '' \circ f \simeq f \circ g'$.
Thus we can define a map $\phi_f:\mathcal{E}(X) \to \mathcal{E}(Y)$ by $\phi_f(g)= \pi \circ s[g]$
where $\pi:\mathcal{E}(f) \to \mathcal{E}(Y)$ is  the natural projection
and $s$ is  the section of the assumption. Hence, $f$ is an $\mathcal{E}$-map.

(2) Suppose that $f$ is a co-$\mathcal{E}$-map.
Then we have a map $\psi_f: \mathcal{E}(Y) \to \mathcal{E}(X)$ such that $g \circ f \simeq f \circ \psi_f(g)$
for any $g \in \mathcal{E}(Y)$. So we have $[\psi_f(g),g]\in \mathcal{E}(f)$ and $h'[\psi_f(g),g]=[g]$.
Thus $h'$ is epimorphic.
Next  suppose that $h'$ is an epimorphism.
For any $[g] \in \mathcal{E}(Y)$, we have $[g',g''] \in \mathcal{E}(f)$ such that $h[g',g'']=[g]$
and thus $g$ is homotopic to $g''$.
Since $[g',g''] \in \mathcal{E}(f)$, $g'$ and $g''$ are homotopy equivalences and $g '' \circ f \simeq f \circ g'$.
Then we can define a map $\psi_f:\mathcal{E}(Y) \to \mathcal{E}(X)$ by $\psi_f(g)= h \circ s'[g]$
for the section $s'$.
Hence, $f$ is a co-$\mathcal{E}$-map.
\end{proof}

\begin{thm}Let $\eta:S^3 \to S^2$ and $\nu:S^7 \to S^4$ be the Hopf fibrations with fibre $S^1$ and $S^3$, respectively.
Let   $\epsilon_3 :S^{11}\to S^3$ be   the generator of $\pi_{11}(S^3)\cong\mathbb{Z}_2$(\cite{T}).
Then\\
(1) $\eta$ is a co-$\mathcal{E}$-map, but not an $\mathcal{E}$-map,\\
(2) $\nu$ is neither an $\mathcal{E}$-map nor a co-$\mathcal{E}$-map and\\
(3) $\epsilon_3$ is both an $\mathcal{E}$-map and a co-$\mathcal{E}$-map.
\end{thm}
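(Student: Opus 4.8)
The plan is to reduce everything to two standard facts about self-maps of spheres. First, $\mathcal{E}(S^n)\cong\Z/2$ for $n\geq1$, generated by a degree $-1$ self-map $\rho_n$; here one may take $\rho_4\simeq-\mathrm{id}_{S^4}$ and $\rho_4\simeq\Sigma\rho_3$ with $\rho_3$ a reflection of $S^3$, and one recalls that a degree $-1$ self-map of $S^3$ acts as $-\mathrm{id}$ on $\pi_*(S^3)$ (since $S^3$ is an $H$-space, a degree $-1$ self-map is the homotopy inverse). Second, the Hopf invariants $H\colon\pi_3(S^2)\to\Z$ and $H\colon\pi_7(S^4)\to\Z$ are homomorphisms with $H(\eta)=H(\nu)=1$, with $H$ an isomorphism on $\pi_3(S^2)\cong\Z$, and with $H(e\circ\alpha\circ d)=e^{2}d\cdot H(\alpha)$ for self-maps of degree $d,e$ of the source and target spheres. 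Since $\mathcal{E}(S^2),\mathcal{E}(S^3),\mathcal{E}(S^4),\mathcal{E}(S^7)$ are all $\Z/2$, every candidate homomorphism $\phi_f$ or $\psi_f$ is either trivial or an isomorphism, so by Definition \ref{def} the problem is to compare two elements of the relevant $\pi_*(S^k)$; recall also that precomposition with a degree $-1$ self-map of the source sphere negates an element of $\pi_*(S^k)$.

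For the negative parts only $H$ is needed. If $\eta$ (resp. $\nu$) were an $\mathcal{E}$-map, taking $g=\rho_3$ (resp. $\rho_7$) in Definition \ref{def} would force $H(\phi_f(g)\circ f)=(\deg\phi_f(g))^{2}H(f)=1$ while $H(f\circ g)=\deg(g)\cdot H(f)=-1$, a contradiction; hence neither $\eta$ nor $\nu$ is an $\mathcal{E}$-map. For the positive parts: $H(\rho_2\circ\eta)=(-1)^{2}H(\eta)=1=H(\eta)$ and $H$ is injective on $\pi_3(S^2)$, so $\rho_2\circ\eta\simeq\eta$; thus the trivial homomorphism $\psi_\eta\colon\mathcal{E}(S^2)\to\mathcal{E}(S^3)$ (with $\psi_\eta(g)=\mathrm{id}$) makes the co-$\mathcal{E}$-map square commute, so $\eta$ is a co-$\mathcal{E}$-map. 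For $\epsilon_3$, since $\pi_{11}(S^3)\cong\Z/2$ (\cite{T}) we get $\beta\circ\epsilon_3\circ\alpha=\pm\epsilon_3=\epsilon_3$ for all $\alpha\in\mathcal{E}(S^{11})$ and $\beta\in\mathcal{E}(S^3)$ (precomposition acting by $\pm1$, postcomposition by an automorphism of $\Z/2$, hence trivially); so the trivial $\phi_{\epsilon_3}$ and the trivial $\psi_{\epsilon_3}$ both work, and $\epsilon_3$ is simultaneously an $\mathcal{E}$-map and a co-$\mathcal{E}$-map.

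The only genuine obstacle is to show $\nu$ is \emph{not} a co-$\mathcal{E}$-map, where $H$ alone is insufficient. Suppose $\psi_\nu\colon\mathcal{E}(S^4)\to\mathcal{E}(S^7)$ exists. If $\psi_\nu$ is an isomorphism then $\nu\circ\rho_7\simeq\rho_4\circ\nu$; but $\nu\circ\rho_7=-\nu$ has $H=-1$, whereas $H(\rho_4\circ\nu)=(-1)^{2}H(\nu)=1$, a contradiction. If $\psi_\nu$ is trivial then one needs $\rho_4\circ\nu\simeq\nu$, which I would refute using $\pi_7(S^4)\cong\Z\{\nu\}\oplus(\Z/12)\{\Sigma\nu'\}$. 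By naturality of Whitehead products $(\rho_4)_*[\iota_4,\iota_4]=[-\iota_4,-\iota_4]=[\iota_4,\iota_4]$, while $\rho_4\simeq\Sigma\rho_3$ gives $(\rho_4)_*(\Sigma\nu')=\Sigma((\rho_3)_*\nu')=-\Sigma\nu'$; applying $(\rho_4)_*$ to the classical relation $[\iota_4,\iota_4]=2\nu-\Sigma\nu'$ of \cite{T} then yields $2(\rho_4)_*\nu=2\nu-2\Sigma\nu'$, so $(\rho_4)_*\nu=\nu+b\,\Sigma\nu'$ with $2b\equiv-2\pmod{12}$, i.e. $b\in\{5,11\}$; in particular $\rho_4\circ\nu\neq\nu$ (and $\neq-\nu$), a contradiction. (Alternatively, $\rho_4\circ\nu\simeq\nu$ would let $\rho_4$ extend to a self-map $\phi$ of ${\mathbb H}P^{2}=S^4\cup_\nu e^8$ with $\phi^{*}u=-u$ on $H^{4}$; then $\phi^{*}$ is the identity on $H^{8}$ because $\phi^{*}(u^{2})=(-u)^{2}=u^{2}$, so for the mod-$3$ power operation, which detects $\nu$ and hence satisfies $P^{1}u\neq0$ in $H^{8}({\mathbb H}P^{2};\Z/3)$, we obtain $P^{1}u=\phi^{*}(P^{1}u)=P^{1}(\phi^{*}u)=-P^{1}u$, forcing $2P^{1}u=0$, which is false.) Either way no admissible $\psi_\nu$ exists. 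The theorem therefore hinges on this one point — the nontriviality of the torsion part of $[\iota_4,\iota_4]$, equivalently of the mod-$3$ Hopf invariant of $\nu$ — everything else being Hopf-invariant bookkeeping.
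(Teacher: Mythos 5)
Your proof is correct, but it follows a genuinely different route from the paper. The paper's proof is essentially a lookup: it quotes the computations of $\Pi(\eta,\eta)=\{(k^2\iota_3,k\iota_2)\}$, $\Pi(\nu,\nu)=\{(k^2\iota_7,k\iota_4)\mid k(k-1)\equiv 0\ (\mathrm{mod}\ 8)\}$ and $\Pi(\epsilon_3,\epsilon_3)=\{((d+2s)\iota_{11},d\iota_3)\}$ from Kim--Oda \cite{KO} and reads off which pairs $(\pm\iota,\ast)$ and $(\ast,\pm\iota)$ occur. You instead reprove everything from first principles: the easy negative statements and the co-$\mathcal{E}$ property of $\eta$ by the Hopf-invariant composition formula $H(e\circ\alpha\circ d)=e^2d\,H(\alpha)$; the case of $\epsilon_3$ from $\pi_{11}(S^3)\cong\mathbb{Z}_2$ alone; and the one genuinely delicate point --- that $(-\iota_4)\circ\nu\not\simeq\nu$, which is invisible to $H$ --- from the torsion of $\pi_7(S^4)\cong\Z\oplus\Z/12$ via the relation $[\iota_4,\iota_4]=\pm(2\nu-\Sigma\nu')$ and the fact that a degree $-1$ self-map of the $H$-space $S^3$ induces $-\mathrm{id}$ on homotopy groups (or, in your alternative, via $P^1\neq 0$ on $H^4(\mathbb{H}P^2;\Z/3)$). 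Your computation $(-\iota_4)_*\nu=\nu+b\,\Sigma\nu'$ with $2b\equiv-2\ (\mathrm{mod}\ 12)$ is consistent with the mod-$8$ condition in \cite{KO} at $k=-1$, and correctly isolates the only input beyond bookkeeping: that the suspension component of $[\iota_4,\iota_4]$ generates the $\Z/12$ summand (if it had order $2$ your congruence would admit $b=0$ and the argument would collapse, so this classical relation of Toda is doing real work). What the paper's approach buys is brevity and uniformity across the three examples; what yours buys is self-containedness and a transparent explanation of \emph{why} $\nu$ fails to be a co-$\mathcal{E}$-map while $\eta$ does not, namely the torsion obstruction $\Sigma\nu'$ that has no analogue in $\pi_3(S^2)\cong\Z$.
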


\begin{proof}
(1) From \cite[Example 4.2 (i)]{KO}, we have $\Pi(\eta,\eta)=\{(k^2 \iota_3, k \iota_2)\,|\,k \in \mathbb{Z}\}$ as a set.
Therefore, we have a homotopy commutative diagram
$$\xymatrix{
S^3 \ar[r]^{k^2 \iota_3 } \ar[d]_\eta & S^3 \ar[d]^\eta\\%
S^2 \ar[r]^{k\iota_2} & S^2 }$$
It is well known that $\mathcal{E}(S^n)=\{\iota_n,-\iota_n\} \cong \mathbb{Z}_2$.
Since $(\iota_3,-\iota_2),(\iota_3,\iota_2) \in \Pi(\eta,\eta)$, $\eta$ is a co-$\mathcal{E}$-map.
However, there is no map $f:S^2 \to S^2$ such that $(-\iota_3,f) \in \Pi(\eta,\eta)$.
Thus $\eta$ is not an $\mathcal{E}$-map.

(2) From  \cite[Example 4.2 (ii)]{KO},
we have $\Pi(\nu,\nu)=\{(k^2 \iota_7, k \iota_4)\,|\,k(k-1)\equiv 0(mod\,\,8)\}$ as a set.
Therefore, we have a homotopy commutative diagram
$$\xymatrix{
S^7 \ar[r]^{k^2 \iota_7 } \ar[d]_\nu & S^7 \ar[d]^\nu\\%
S^4 \ar[r]^{k\iota_4} & S^4 }$$
Since there are no maps $f:S^7 \to S^7$ and $g:S^4 \to S^4$ such that
$(f,-\iota_4),(-\iota_7,g) \in \Pi(\nu,\nu)$,
$\nu$ is neither an $\mathcal{E}$-map nor a co-$\mathcal{E}$-map.

(3) From  \cite[Example 4.2 (iv)]{KO},
we have
$\Pi(\epsilon_3,\epsilon_3)=\{((d+2s) \iota_{11}, d \iota_3)\,|\, d,s \in \mathbb{Z}\} \cong \mathbb{Z} \times \mathbb{Z}$ as a group.
Therefore we have a homotopy commutative diagram
$$\xymatrix{
S^{11} \ar[rr]^{(d+2s) \iota_{11} } \ar[d]_{\epsilon_3} && S^{11} \ar[d]^{\epsilon_3} \\%
S^3 \ar[rr]^{d\iota_3} && S^3 }$$
Since $(\iota_{11},\iota_3),(-\iota_{11},-\iota_3) \in \Pi(\e_3,\e_3)$,
$\e_3$ is both an $\mathcal{E}$-map and a co-$\mathcal{E}$-map.
\end{proof}

\begin{ex}
(1) Let $e:X \to \Omega \Sigma X$ be the adjoint of $id_{\Sigma X}$ from the one-to-one correspondence $[X, \Omega \Sigma X] \cong [\Sigma X, \Sigma X]$.
We know that $e(x)(t)=\langle x,t\rangle$.
Let $f$ be a self homotopy equivalence on $X$, that is, $ f \in \mathcal{E}(X)$ and let $f'$ be a homotopy inverse of $f$.
It is clear that the map $\Sigma f: \Sigma X \to \Sigma X$, $\Sigma f\langle x,t \rangle =\langle f(x),t\rangle$, is a homotopy equivalence with homotopy inverse $\Sigma f'$.
Then we define a map $\widetilde{f}:\Omega\Sigma X \to  \Omega\Sigma X$ by $\widetilde{f}(\alpha)(t)=\Sigma f (\alpha (t))$.
Define another map $\widetilde{f'}: \Omega \Sigma X \to  \Omega \Sigma X$ by $\widetilde{f'}(\alpha)(t)=\Sigma f' (\alpha (t))$.
Clearly we have $\widetilde{f} \circ \widetilde{f'} \simeq id$ and $\widetilde{f'} \circ \widetilde{f} \simeq id$.
Moreover we have
$e(f(x))(t)=\langle f(x),t\rangle$
and
$\tilde{f}( e(x))(t) =\Sigma f (e(x)(t))=\Sigma f \langle x,t \rangle =\langle f(x),t \rangle$.
Therefore we have a commutative diagram
$$\xymatrix{
X \ar[rr]^{f} \ar[d]_e && X \ar[d]^e \\%
\Omega \Sigma X \ar[rr]^{\widetilde{f}} && \Omega \Sigma X}$$
Thus $e:X \to \Omega \Sigma X$ is an $\mathcal{E}$-map.\\

(2) Let $\pi:\Sigma \Omega Y \to Y$ be the adjoint of $id_{\Omega Y}$ from the one-to-one correspondence $[\Sigma \Omega Y, Y] \cong [\Omega Y, \Omega Y]$.
We know that $\pi\langle \alpha,t \rangle =\alpha(t)$.
Let $g$ be a self homotopy equivalence on $Y$, that is $ g \in \mathcal{E}(Y)$ and let $g'$ be a homotopy inverse of $g$.
Then we define a map $\widetilde{g}:\Sigma \Omega Y \to \Sigma \Omega Y$ by $\widetilde{g}\langle \alpha,t \rangle=
\langle g \circ \alpha, t\rangle$ and
$\widetilde{g'}:\Sigma \Omega Y \to \Sigma \Omega Y$ by $\widetilde{g'} \langle \alpha,t
\rangle =\langle g' \circ \alpha, t\rangle$.
Clearly we have $\widetilde{g} \circ \widetilde{g'} \simeq id$ and $\widetilde{g'} \circ \widetilde{g} \simeq id$.
Moreover we have
$(\pi \circ \widetilde{g})\langle \alpha,t\rangle =\pi\langle g \circ \alpha,t \rangle =(g \circ \alpha)(t)$
and
$(g \circ \pi) \langle \alpha,t \rangle =g(\alpha(t))$.
Therefore we have a commutative diagram
$$\xymatrix{
\Sigma \Omega Y \ar[rr]^{\widetilde{g}} \ar[d]_\pi && \Sigma \Omega Y \ar[d]^\pi \\%
Y \ar[rr]^g && Y}$$
Therefore $\pi:\Sigma \Omega Y \to Y$ is a co-$\mathcal{E}$-map.
\end{ex}

\begin{ex}
There is a natural homomorphism
$\mathcal{E}(X_{(n)})\to \mathcal{E}(X_{(n-1)})$
obtained by restricting the map  to a lower
 Postnikov section  \cite[p.27]{AC}.
 Thus  the principal $K(\pi_n(X),n)$-fibration
$X_{(n)}\to X_{(n-1)}$
is an $\mathcal{E}$-map.
The restriction map $X\to X_{(n-1)}$ is also an $\mathcal{E}$-map.
On the other hand, for the $n$-skeleton $X^{(n)}$,
the inclusions
$X^{(n)}\to X^{(n+1)}$ and $X^{(n)}\to X$
are  both  co-$\mathcal{E}$-maps.
\end{ex}

\begin{rem}Recall that a space $X$ is said to be
(homotopically) {\it rigid}
when  $\mathcal{E}(X)=\{ id_X\}$ \cite{CV} (\cite{AL}).
 If $X$ is rigid, then every map $f:X\to Y$  is an ${\mathcal E}$-map by $\phi_f(id_X)=id_Y$.
 If $Y$ is rigid, then  every map $f:X\to Y$  is a co-${\mathcal E}$-map by $\psi_f(id_Y)=id_X$.
Also we can construct infinitely many examples of ${\mathcal E}$-maps
and co-${\mathcal E}$-maps  by using  the functor of \cite[Remark 2.8]{CV}.
\end{rem}

\section{Computations in  Sullivan models}

We assume that $X$ is a nilpotent CW complex.
Let
$M(X)=(\Lambda {V},d)$
be the  Sullivan minimal model of $X$ \cite{Su}.
  It is a free $\Q$-commutative differential graded algebra (DGA)
 with a $\Q$-graded vector space $V=\bigoplus_{i\geq 1}V^i$
 where $\dim V^i<\infty$ and a decomposable differential; i.e., $d(V^i) \subset (\Lambda^+{V} \cdot \Lambda^+{V})^{i+1}$ and $d \circ d=0$.
 Here  $\Lambda^+{V}$ is
 the ideal of $\Lambda{V}$ generated by elements of positive degree.
The degree of a homogeneous element $x$ of a graded algebra is denoted as $|{x}|$.
Then  $xy=(-1)^{|{x}||{y}|}yx$ and $d(xy)=d(x)y+(-1)^{|{x}|}xd(y)$.
Note that  $M(X)$ determines the rational homotopy type of $X$.
In particular,  $H^*(\Lambda {V},d)\cong H^*(X;\Q )$
and $V^i\cong Hom(\pi_i(X),\Q)$.
Refer to  \cite{FHT} for details.

Let ${\rm Aut}M$ be the group of DGA-automorphisms of a DGA $M$.
For a nilpotent space $X$ and a (not necessarily  minimal) model
$M(X)$, there is a group isomorphism
$$\mathcal{E}(X_{\Q})\cong \mathcal{E}( M(X)),$$
where $\mathcal{E}( M(X))={\rm Aut}M(X)/\sim$ is the group of  self-DGA-homotopy equivalence classes
of $M(X)$ \cite{Su}.
Now  recall about ``DGA-homotopy'' in \cite{GM}:
In general, two maps $f:M(Y)\to M(X)$ and $g:M(Y)\to M(X)$ are DGA-homotopic (denote as $f\sim g$) if there is a DGA-map
$H:M(Y)\to M(X)\otimes \Lambda (t,dt)$ such that
$H\mid_{t=0,dt=0}=f$ and $H\mid_{t=1,dt=0}=g$.
Here $|t|=0$ and $|dt|=1$ with $d(t)=dt$, $d(dt)=0$.

The group $\mathcal{E}( M(X))$ does not depend on choosing a model of $X$.
For example,  the minimal model $M=M(S^{2n+1})=(\Lambda w,0)$ and a  non-minimal model  $M'=(\Lambda (y,w,v),D)$
with $|y|=2$, $|w|=2n+1$,  $|v|=1$, $Dy=0$,  $Dw=y^{n+1}$ and $Dv=y$ are both  models of $S^{2n+1}$.
Obviously we have $\mathcal{E}( M)\cong \Q^*$ by $w\to aw$ for $a\in \Q^*$.
On the other hand, in $\mathcal{E}(M')$
we can set
$H(y)=cyt+cvdt$, $H(v)=cvt$ and $H(w)=aw+by^nv+b'y^nvt^{n+1}$ with $a,b,b',c\in \Q^*$.
Then $a+b=0$ and $b'=c^{n+1}$ from $D\circ H=H\circ D$.
Thus two maps $f\in {\rm Aut}M'$ given by $f(y)=f(v)=0$, $f(w)=a(w-y^nv)$
and $g\in {\rm Aut}M'$ given by $g(y)=cy$, $g(v)=cv$, $g(w)=a (w-y^nv)+c^{n+1}y^nv$
are DGA-homotopic.
Hence we have   $\mathcal{E}(M')=\mathcal{E}(M)\cong \Q^*$
as in  Example \ref{Hopf}(1) below.

\begin{rem}
From the universality of the localization \cite{HMR},
the rationalization map $l:X\to X_{\Q}$ is an  $\mathcal{E}$-map,
but it is not  a co-$\mathcal{E}$-map in general.
For example,
when $X=S^3$,
 the elements $f$ of  $\mathcal{E}( M(X))=\mathcal{E}(\Lambda (x),0)$
with $f(x)=ax$ for  $a\neq \pm 1\in \Q^*= \Q -0$
can not be realized as a homotopy  equivalence of $X$.
\end{rem}

 The model   of a map $f: X\to  Y$ between nilpotent spaces
 is given by
a relative model:
$$M(Y)=(\Lambda W,d_Y)\overset{i}\to
 (\Lambda W\otimes \Lambda V,D)\overset{q}\to  (\Lambda V,\overline{D})$$
 with  $D|_{\Lambda W}=d_Y$ and the minimal model
 $ (\Lambda V,\overline{D})$ of the homotopy fiber of $f$.
It is well known that there is a quasi-isomorphism
$M(X)\to (\Lambda W\otimes \Lambda V,D)$ \cite{FHT}.
Then   Definition \ref{rdef}  is   translated  to

\begin{defn}\label{model}
Let  $f:X\to Y$ be a map  between nilpotent spaces.

(1) The map $f$ is    a { rational $\mathcal{E}$-map} if and only if
there is a homomorphism
$\phi_f :\mathcal{E}(\Lambda W\otimes \Lambda V,D)\to \mathcal{E}(\Lambda W,d_Y)$
such that
$$\xymatrix{(\Lambda  W\otimes \Lambda V,D)
\ar[r]^g& (\Lambda  W\otimes \Lambda V,D)\\
 (\Lambda  W,d_Y)\ar[u]^i\ar[r]^{\phi_f(g)}& (\Lambda  W,d_Y)\ar[u]_i
 }$$
is DGA-homotopy commutative for any element $g$ of  $\mathcal{E}(\Lambda W\otimes \Lambda V,D)$.

(2) The map $f$ is    a { rational co-$\mathcal{E}$-map} if and only if
there is a homomorphism
$\psi_f :\mathcal{E}(\Lambda W,d_Y)\to \mathcal{E}(\Lambda W\otimes \Lambda V,D)$
such that
$$\xymatrix{(\Lambda  W\otimes \Lambda V,D)
\ar[r]^{\psi_f(g)}& (\Lambda  W\otimes \Lambda V,D)\\
 (\Lambda  W,d_Y)\ar[u]^i\ar[r]^g& (\Lambda  W,d_Y)\ar[u]_i
 }$$
is DGA-homotopy commutative for any element $g$ of $\mathcal{E}(\Lambda W,d_Y)$.
 \end{defn}

\begin{ex}\label{Hopf}
(1) For the Hopf fibration $S^1\to S^{2n+1}\overset{f}\to  \C P^n$,
the relative model is given by
 $$(\Lambda (y,w),d_Y)\to
 (\Lambda (y,w,v),D)\to  (\Lambda (v),0)$$
with $|y|=2$, $|w|=2n+1$,  $|v|=1$, $d_Yw=y^{n+1}$ and $Dv=y$.
We can identify $\mathcal{E}(\C P^n_{\Q})$ as $\Q^*$
by $g(y)=ay$ and $g(w)=a^{n+1}w$ for $g\in \mathcal{E}(\C P^n_{\Q})$
and $a\in\Q^*$.
Also we have $\mathcal{E}(S^{2n+1}_{\Q})= \mathcal{E}(\Lambda (y,w,v),D)=\mathcal{E}(\Lambda w,0)\cong \Q^*$.
 Then there is a homomorphism
$$\psi_{f} :\Q^*\cong \mathcal{E}(\C P^n_{\Q})\to \mathcal{E}(S^{2n+1}_{\Q})\cong \Q^*$$
which is given by $\psi_{f} (a)=a^{n+1}$ for $a\in \Q^*$.
Thus $f$ is a rational co-$\mathcal{E}$-map,
but it is not a rational $\mathcal{E}$-map.

(2) Let $X$ be the pullback of the sphere bundle of the tangent bundle of $S^{m+n}$ by the canonical
degree $1$ map $S^m\times S^n\to S^{m+n}$ for odd integers $m$ and $n$.
Then it is the total space of a  fibration
$S^{m+n-1}\to X\overset{f}\to  S^m\times S^n$
whose model is $$(\Lambda (w_1,w_2),0)\to
 (\Lambda (w_1,w_2,u),D)\to  (\Lambda (u),0)$$
with $|w_1|=m$, $|w_2|=n$, $|u|=m+n-1$ and $Du=w_1w_2$
 is both a rational $\mathcal{E}$-map
and a rational co-$\mathcal{E}$-map.

(3) The  fibration $S^m\times  S^{m+n-1} \to  X\overset{f}\to  S^n$ ($m\neq n$ are odd) 
 whose model is $$(\Lambda (w),0)\to
 (\Lambda (w,v,u),D)\to  (\Lambda (v,u),0)$$
where  $|w|=n$, $|v|=m$, $|u|=m+n-1$ and $Du=wv$ with $m$, $n$ odd
is both a rational $\mathcal{E}$-map and a rational co-$\mathcal{E}$-map.

(4) For the fibration $\C P^{n-1}\to \C P^{2n-1}\overset{f}\to  S^{2n}$
given by $$(\Lambda (y,w),d_Y)\to
 (\Lambda (y,w,x,v),D)\to  (\Lambda (x,v),\overline{D})$$
with $|y|=2n$, $|w|=4n-1$, $|x|=2$, $|v|=2n-1$, $d_Yy=0$, $d_Yw=y^2$, 
$Dx=\overline{D}x=0$, $Dv=y-x^n$ and $\overline{D}v=x^n$,
the map $f$ is a rational $\mathcal{E}$-map given by $\phi_f(a)=a^n$ for $a\in \Q^*$
but not  a rational co-$\mathcal{E}$-map.
\end{ex}

\begin{ex}\label{ex2}
For an $n$-dimensional manifold $X$,
the collapsing map of lower cells $f:X\to  S^{n}$
is an $\mathcal{E}$-map.
Indeed, from the commutative diagram between cofibrations
$$\xymatrix{
X^{(n-1)}\ar[d]\ar[rr]^{g\mid_{X^{(n-1)}}}&& X^{(n-1)}\ar[d]\\
X\ar[d]_f\ar[rr]^g && X\ar[d]^f\\
S^n\ar@{.>}[rr]^{\overline{g}} && S^n,
}$$
we have $\phi_f(g)=\overline{g}$,
but it is  not a (rational) co-$\mathcal{E}$-map in general.
For example, the collapsing map of lower cells $f: X = \mathbb{C}P^n \to S^{2n}=Y$ induces a DGA-map
$$f^*:M(Y)=(\Lambda (y,w),d_Y) \to (\Lambda (x,v),d_X)=M(X)$$
with $d_Y w =y^2$, $d_X v =x^{n+1}$, $f^\ast(y) = x^n$ and $f^\ast (w) =x^{n-1} v$.
The map $f$ is  a rational $\mathcal{E}$-map by $\phi_f(a)=a^n$ for $a\in\Q^*$
but not  a rational co-$\mathcal{E}$-map.
Indeed, for $g^*(y)=ay$ with $a\not\in (\Q^*)^{\times m}:=\Q^*\cdot \Q^* \cdots \Q^*$ (m-times),
we cannot define $\psi_{f^*}(g^*)$.
\end{ex}

\begin{ex}
Let $\Omega Y=map ((S^1,*),(Y,*))$ be the base point preserving the loop space of
a simply connected space $Y$ and
$LY=map (S^1,Y)$,  the free loop space of $Y$.
We consider the evaluation map
$f:LY\to  Y$ with $f(\sigma )=\sigma (*)$.
It is a co-$\mathcal{E}$-map
by
$\psi_f(g)(h)=g\circ h$ for
$g\in \mathcal{E}(Y)$.
 What is the (rational) homotopical condition of $Y$ that allows $f$ to be a (rational) $\mathcal{E}$-map?
According to \cite{VS},
the relative model of the free loop fibration
$\Omega Y\to LY\overset{f}\to  Y$:
$$ M(Y)=(\Lambda V,d)\to (\Lambda V\otimes \Lambda \overline{V},D)\to (\Lambda \overline{V},0)$$
 with $M(LY)=(\Lambda V\otimes \Lambda \overline{V},D)$
is defined as follow:
The graded vector space $\overline{V}$ satisfies
$\overline{V}^n\cong V^{n-1}$ for $n>0$
and denote by $s:V\to \overline{V}$ ($s(v):=\overline{v}$) this isomorphism of degree $-1$.
There is a unique extension  of $s$ into a derivation of algebra $s:\Lambda V\otimes \Lambda \overline{V}
\to \Lambda V\otimes \Lambda \overline{V}$
 such that $s(\overline{V})=0$.
The differential $D$ is given by
$D(v)=d(v)$ for $v\in V$ and $D({\overline{v}})=-s\circ d (v)$
for ${\overline{v}}\in {\overline{V}}$.

If every  DGA-isomorphism $g$ of  $(\Lambda V\otimes \Lambda \overline{V},D)$
 satisfies  $g|_{ \Lambda V}\in \mathcal{E}(\Lambda V,d)$,
 then   $f$ ($M(f)$)  is a rational $\mathcal{E}$-map
by $\phi_f(g)=g|_{ \Lambda V}$.\\
(1) When $Y=S^n$, we observe that the map $f$ is a rational $\mathcal{E}$-map.
If $n$ is even,  $M(S^n)=(\Lambda (x,y),d)$ with $|x|=n$, $|y|=2n+1$, $dx=0$ and $dy=x^2$.
For example,  when $n=2$, note that there is no  DGA-map $g(x)=x+\overline{y}$.\\
(2) When  $Y=S^m\times S^n$ for odd integers $m< n$, the map $f$ is a rational $\mathcal{E}$-map
 if and only if
$m-1$ is not a  divisor of $n-1$.
Indeed,    let $M(S^m\times S^n)=(\Lambda (x,y),0)$.
When $n-1=a(m-1)$
for an integer $a>1$, there is a DGA-isomorphism  $g:(\Lambda (x,y,\overline{x},\overline{y}),0)\to (\Lambda (x,y,\overline{x},\overline{y}),0)$
with $g(x)=x$, $g( \overline{x})=\overline{x}$, $g( \overline{y})=\overline{y}$
 and $g (y)=y+\overline{x}^{a-1}x$.
 Then  $f$ cannot be a rational $\mathcal{E}$-map.
 When $n-1\neq a(m-1)$ for any $a$, a self-map $g$ is given by $g(x)=x$ and $g(y)=y$
from the degree reason.

\end{ex}
\vspace{0.2cm}

\noindent{\it Proof of Theorem \ref{hg}(1)}.
Note that $\pi_*(j)_{\Q}$ is injective if and only if the model of $j:H\to G$ is given as the projection
$M(G)\cong (\Lambda (v_1,\cdots ,v_k,u_1,\cdots , u_l),0)\to (\Lambda (v_1,\cdots ,v_k),0)\cong M(H)$
after a suitable basis change.
Then we can define as $\phi_j(g)=g\otimes 1_{\Lambda (u_1,\cdots ,u_l)}$
for any $g\in \mathcal{E}(\Lambda (v_1,\cdots ,v_k),0)$.
\hfill\qed\\

For the  $n$-dimensional  unitary group $U(n)$,
 $M(U(n))=M(S^1\times \cdots \times S^{2n-1})=(\Lambda (v_1,\cdots , v_n),0)$
with $|v_i|=2i-1$.
For the  n-dimensional special unitary group $SU(n)$,
 $M(SU(n))=(\Lambda (v_1,\cdots , v_{n-1}),0)$
with $|v_i|=2i+1$.
For the  n-dimensional symplectic  group $Sp(n)$,
 $M(Sp(n))=(\Lambda (v_1,\cdots , v_n),0)$
with $|v_i|=4i-1$.

\begin{ex}
In general,  for a connected closed sub-Lie group $H$ of a compact connected Lie group $G$,
the inclusion $j:H\to G$, is not a rational $\mathcal{E}$-map.
For example, the blockwise inclusion $j:SU(3)\times SU(3)\to SU(6)$ is not.
Indeed, $M(SU(3)\times SU(3))=(\Lambda (u_1,w_1,u_2,w_2),0)$
with $|u_1|=|w_1|=3$ $|u_2|=|w_2|=5$ and  $M(SU(6))=(\Lambda (v_1,v_2,v_3,v_4,v_5),0)$
with $|v_i|=2i+1$.
$M(j)(v_i)=u_i+w_i$ for $i=1,2$.
Then we cannot define $\phi_j(g)$ for $g\in \mathcal{E}(\Lambda (u_1,w_1,u_2,w_2),0)$
when  $g(u_i)=u_i$ $g(w_i)=-w_i$ for example.
\end{ex}

\begin{lem}\label{odd2}
Let  $X=S^{a_1}\times \cdots \times S^{a_m}\times Y$ and
$Y= S^{b_1}\times  \cdots \times S^{b_n}$
for odd-integers $a_1\leq \cdots \leq a_m\leq b_1\leq \cdots \leq b_n$.
 Then  the second  factor projection  map $f:X\to Y$
is a rational {$\mathcal{E}$-map} if and only if there are  no subsets
$\{ i_1, \cdots , i_k\}$    of $\{ 1,\cdots , m\}$
and $\{j_1,\cdots ,j_k\}$
of $\{ 1,\cdots , n\}$  with
 $b_k=a_{i_1}+\cdots +a_{i_k}+b_{j_1}+\cdots +b_{j_k}$ for $k=1,..,n$.
\end{lem}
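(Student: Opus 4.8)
The plan is to pass to Sullivan minimal models and reduce the statement to an elementary condition on DGA-automorphisms. Since $X=S^{a_1}\times\cdots\times S^{a_m}\times Y$ with all $a_i,b_j$ odd, we have $M(X)=(\Lambda(x_1,\dots,x_m,y_1,\dots,y_n),0)$ with $|x_i|=a_i$, $|y_j|=b_j$, $M(Y)=(\Lambda(y_1,\dots,y_n),0)$, and the relative model of $f:X\to Y$ is the canonical inclusion $i:M(Y)\hookrightarrow M(X)$ with zero differential. The first key point is that a DGA-homotopy between maps whose source and target both carry the zero differential is trivial: if $H:A\to B\otimes\Lambda(t,dt)$ is a DGA-map with $d_A=d_B=0$, then writing $H(a)=\sum_k P_k(a)t^k+\sum_k Q_k(a)t^k\,dt$, the identity $D\circ H=H\circ d_A=0$ forces $P_k(a)=0$ for all $k\ge1$, so $H(a)\in B\oplus B[t]\,dt$ and hence $H|_{t=0}=H|_{t=1}$. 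Therefore $\mathcal{E}(M(X))=\mathrm{Aut}\,M(X)$, $\mathcal{E}(M(Y))=\mathrm{Aut}\,M(Y)$, and by Definition \ref{model}(1) the map $f$ is a rational $\mathcal{E}$-map if and only if there is a homomorphism $\phi_f:\mathrm{Aut}\,M(X)\to\mathrm{Aut}\,M(Y)$ with $g\circ i=i\circ\phi_f(g)$ for all $g$. Since $i$ is the inclusion of the subalgebra $\Lambda(y_1,\dots,y_n)$, such a $\phi_f$ is forced to be $\phi_f(g)=g|_{\Lambda(y_1,\dots,y_n)}$ and exists exactly when every DGA-automorphism $g$ of $M(X)$ maps $\Lambda(y_1,\dots,y_n)$ into itself; applying this to $g^{-1}$ as well shows $g$ then restricts to an automorphism of $M(Y)$, and $g\mapsto g|_{\Lambda(y_*)}$ is visibly a group homomorphism. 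So everything reduces to: when does every automorphism of $M(X)$ preserve the subalgebra generated by the $y_j$?

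Next I translate this into the arithmetic condition of the statement. For any $g\in\mathrm{Aut}\,M(X)$ and any $j$, the element $g(y_j)$ is a $\Q$-linear combination of monomials of degree $b_j$, and since all generators have odd degree each such monomial is a product $x_{i_1}\cdots x_{i_p}\,y_{j_1}\cdots y_{j_q}$ of pairwise distinct generators with $a_{i_1}+\cdots+a_{i_p}+b_{j_1}+\cdots+b_{j_q}=b_j$. Such a monomial fails to lie in the subalgebra $\Lambda(y_*)$ precisely when $p\ge1$, i.e. when at least one $a$-summand occurs; note that then $b_{j_1}+\cdots+b_{j_q}<b_j$, so $j\notin\{j_1,\dots,j_q\}$. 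Hence the degree-$b_k$ part of $M(X)$ coincides with the degree-$b_k$ part of $\Lambda(y_*)$ for every $k$ if and only if no $b_k$ can be written as such a sum with at least one $a$-summand, which is the condition in the statement. Under that condition every $g(y_j)$ automatically lies in $\Lambda(y_*)$, so by the first paragraph $f$ is a rational $\mathcal{E}$-map.

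For the converse I exhibit an obstructing automorphism. Suppose the condition fails, say $b_k=a_{i_1}+\cdots+a_{i_p}+b_{j_1}+\cdots+b_{j_q}$ for pairwise distinct $i_1,\dots,i_p$, pairwise distinct $j_1,\dots,j_q$, and $p\ge1$; as above $k\notin\{j_1,\dots,j_q\}$, so $\mu:=x_{i_1}\cdots x_{i_p}\,y_{j_1}\cdots y_{j_q}$ is a nonzero element of $M(X)$ of degree $b_k$ that does not lie in $\Lambda(y_*)$. Let $g:M(X)\to M(X)$ be the identity on all generators except $g(y_k)=y_k+\mu$. This is a DGA-map because the differential vanishes, and it is invertible: its inverse is the identity on all generators except $y_k\mapsto y_k-\mu$ (here one uses that $g$ fixes every generator occurring in $\mu$, which holds since $k\notin\{j_1,\dots,j_q\}$). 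Thus $g\in\mathcal{E}(M(X))$ but $g(y_k)\notin\Lambda(y_*)$, so no homomorphism $\phi_f$ can satisfy $g\circ i=i\circ\phi_f(g)$; together with the triviality of DGA-homotopy into $M(X)$ established in the first paragraph, this shows $f$ is not a rational $\mathcal{E}$-map, completing the proof.

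The step I expect to require the most care is the first paragraph: turning the homotopy-commutative Definition \ref{model}(1) into the purely algebraic requirement that every DGA-automorphism of $M(X)$ preserve $M(Y)$. This rests on the triviality of DGA-homotopy between maps supported on zero-differential algebras and on checking that $g\mapsto g|_{\Lambda(y_*)}$ is a well-defined automorphism of $M(Y)$ and a group homomorphism. After that, the content is the elementary monomial count of the last two paragraphs, which also transparently covers the boundary case $a_m=b_1$ (an automorphism may then send $y_1$ to a scalar multiple of $x_m$, the $p=1$, $q=0$ instance of the condition).
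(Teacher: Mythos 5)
Your proof is correct and follows essentially the same route as the paper: pass to the (zero-differential) Sullivan models, observe that a homomorphism $\phi_f$ must be restriction to $\Lambda(y_1,\dots,y_n)$, and when some $b_k$ decomposes as in the statement, obstruct with the automorphism $y_k\mapsto y_k+x_{i_1}\cdots x_{i_p}y_{j_1}\cdots y_{j_q}$. You supply details the paper leaves implicit (triviality of DGA-homotopy for zero differentials, invertibility of the obstructing map, and the precise reading of the arithmetic condition with independently sized, possibly empty $b$-index sets), but the argument is the same.
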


\begin{proof}
Put $M(X)=(\Lambda (x_1,..,x_m, y_1,..,y_n),0)$ and
$M(Y)=(\Lambda (y_1,..,y_n),0)$
with $|x_i|=a_i$ and $|y_i|=b_i$.
If  $b_k=a_{i_1}+\cdots +b_{j_k}$,
there is a map $g\in \mathcal{E}(M(X))$ such that
$$g(x_i)=x_i\ \ (i\leq m),\ \ \ g(y_i)=y_i\ \ (i\neq k),\ \ \ g(y_k)=y_k+x_{i_1}\cdots x_{i_k}y_{j_1}\cdots y_{j_k}$$
and $M(f)(y_i)=y_i$ for all $i$.
Then  we can not have a DGA-homotopy commutative diagram
$$\xymatrix{(\Lambda  (x_1,..,x_m,y_1,..,y_n),0)
\ar[r]^{g}& (\Lambda   (x_1,..,x_m,y_1,..,y_n),0)\\
 (\Lambda   (y_1,..,y_n),0)\ar[u]^{M(f)}\ar[r]^{\phi_f (g)}& (\Lambda   (y_1,..,y_n),0).\ar[u]_{M(f)}
 }$$
 If  $b_k\neq a_{i_1}+\cdots +b_{j_k}$ for any $k$ and   index set,
we can put $$\phi_f(g)=g\mid_{\Lambda (y_1,..,y_n)}$$
in the diagram  for any map $g\in \mathcal{E}(M(X))$.
\end{proof}

\begin{thm}(1)
When $2<m<n$,
the natural  projection
$p_{n,m}:U(n)\to U(n)/U(m)$ is a rational $\mathcal{E}$-map if and only if $n<5$.

(2) When $2<m<n$,
the natural  projection
$p_{n,m}:SU(n)\to SU(n)/SU(m)$ is a rational $\mathcal{E}$-map if and only if $n<8$.
\end{thm}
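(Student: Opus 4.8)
The plan is to recognize that, rationally, all the spaces in sight are products of odd‑dimensional spheres and that $p_{n,m}$ is a factor projection, so that the question collapses to an arithmetic condition on dimensions of the kind handled by Lemma \ref{odd2}.

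First I would record the models. The complex Stiefel manifold $U(n)/U(m)$ has free cohomology---an exterior algebra on generators of degrees $2m+1,2m+3,\dots,2n-1$---hence is formal with minimal model $(\Lambda(v_{m+1},\dots,v_n),0)$, $|v_i|=2i-1$; and since the $SU(n)$‑action on orthonormal $(n-m)$‑frames in $\C^n$ is transitive with isotropy $SU(m)$, the space $SU(n)/SU(m)$ is diffeomorphic to the same Stiefel manifold and carries the analogous model. In the relative model of $U(m)\to U(n)\overset{p_{n,m}}{\to} U(n)/U(m)$ a fiber generator $v_i$ ($i\le m$) would have $Dv_i$ of degree $2i\le 2m$ inside the ideal generated by $v_{m+1},\dots,v_n$, which is trivial below degree $2m+1$; hence $D=0$ and the relative model is the inclusion of the exterior subalgebra on the top generators,
\[
(\Lambda(v_{m+1},\dots,v_n),0)\overset{i}{\to}(\Lambda(v_1,\dots,v_n),0)\overset{q}{\to}(\Lambda(v_1,\dots,v_m),0).
\]
So $p_{n,m}$ is, rationally, the projection $U(m)\times U(n)/U(m)\to U(n)/U(m)$; likewise $p_{n,m}\colon SU(n)\to SU(n)/SU(m)$ is rationally the projection $SU(m)\times SU(n)/SU(m)\to SU(n)/SU(m)$, where now the total‑space generators have degrees $3,5,\dots,2n-1$, the fiber generators degrees $3,\dots,2m-1$, and the base generators degrees $2m+1,\dots,2n-1$.

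Second, I would apply Lemma \ref{odd2} (or rerun its proof for these relative models). All differentials vanish, so a DGA‑homotopy between maps of these algebras is plain equality; hence $\mathcal{E}$ of each model is its full automorphism group, the square of Definition \ref{model}(1) must commute strictly, and the criterion becomes: $p_{n,m}$ is a rational $\mathcal{E}$‑map if and only if no degree $d$ of a base generator equals a sum of three or more pairwise distinct generator degrees, at least one of which is a fiber degree. Indeed, if no such relation holds then every automorphism $g$ of the total model carries each base generator into the base subalgebra, so $\phi_{p_{n,m}}(g):=g|_{\Lambda(v_{m+1},\dots,v_n)}$ is a homomorphism realizing the square; and if $d=|v_{i_1}|+\dots+|v_{i_k}|$ is such a relation with $|v_j|=d$ and some $i_\ell\le m$, then $g(v_j)=v_j+v_{i_1}\cdots v_{i_k}$ with the other generators fixed is an automorphism for which $g\circ i(v_j)$ involves the fiber generator $v_{i_\ell}$, so no $\phi_{p_{n,m}}(g)$ exists.

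Third comes the arithmetic. For $U(n)$ the fiber degrees are $1,3,\dots,2m-1$ and the base degrees are $2m+1,\dots,2n-1$; the least sum of three distinct odd positive integers is $1+3+5=9$, so a forbidden relation forces a base degree $\ge 9$, i.e.\ $2n-1\ge 9$, i.e.\ $n\ge 5$. Conversely, for $n\ge 5$ a relation always appears: if $m\in\{3,4\}$ then $9=1+3+5$ is a base degree built from fiber degrees, and if $m\ge 5$ then $2m+1=1+3+(2m-3)$ is the smallest base degree built from fiber degrees. Hence $p_{n,m}$ is a rational $\mathcal{E}$‑map exactly when $n<5$, which under $2<m<n$ is the single case $(m,n)=(3,4)$. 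For $SU(n)$ the degree $1$ is absent, so the least sum of three distinct admissible degrees is $3+5+7=15$, forcing a base degree $\ge 15$, i.e.\ $n\ge 8$; and for $n\ge 8$ one has $15=3+5+7$ as a base degree when $m\le 7$ and $2m+1=3+5+(2m-7)$ when $m\ge 8$. Hence $p_{n,m}$ is a rational $\mathcal{E}$‑map exactly when $n<8$.

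The step I expect to be the real work is the last one: verifying, with a short split according to whether $m$ lies below or above the relevant threshold, that a forbidden degree relation is present for \emph{every} admissible pair $(m,n)$ with $n$ at least the threshold, and absent below it (where the base degrees are simply too small to be a sum of three distinct admissible degrees). The identification of the relative model with the inclusion of an exterior subalgebra, and the ensuing appeal to Lemma \ref{odd2}, are routine.
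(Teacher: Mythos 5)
Your proposal is correct and follows essentially the same route the paper intends: the theorem is stated immediately after Lemma \ref{odd2} precisely because, rationally, $p_{n,m}$ is the second-factor projection of a product of odd spheres (fiber degrees $1,3,\dots,2m-1$ resp.\ $3,\dots,2m-1$, base degrees $2m+1,\dots,2n-1$), and the lemma reduces everything to the existence of a degree relation $b=a_{i_1}+\cdots+b_{j_l}$ with at least one fiber degree. Your arithmetic ($9=1+3+5$ forcing $n\ge 5$ for $U(n)$, $15=3+5+7$ forcing $n\ge 8$ for $SU(n)$, with the split $2m+1=1+3+(2m-3)$, resp.\ $3+5+(2m-7)$, for large $m$) matches the stated thresholds, and the extra justifications you supply (vanishing of the relative differential, DGA-homotopy being equality here) are correct.
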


\begin{lem}\label{odd}
Let  $X=S^{a_1}\times \cdots \times S^{a_m}$ and
$Y=X\times S^{b_1}\times  \cdots \times S^{b_n}$
for odd-integers $a_1\leq \cdots \leq a_m\leq b_1\leq \cdots \leq b_n$.
 Then  the first factor inclusion  map $f:X\to Y$
is a rational {co-$\mathcal{E}$-map} if and only if there is no subset $\{ i_1, \cdots , i_k\}$
of $\{ 1,\cdots , m\}$
with  $b_k=a_{i_1}+\cdots +a_{i_k}$ for $k=1,..,n$.
\end{lem}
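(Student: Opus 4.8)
The plan is to run the argument dual to the proof of Lemma~\ref{odd2}, restricting a self-equivalence of $M(Y)$ to the sub-DGA generated by the $x$-variables rather than by the $y$-variables. Fix minimal models $M(X)=(\Lambda(x_1,\dots,x_m),0)$ and $M(Y)=(\Lambda(x_1,\dots,x_m,y_1,\dots,y_n),0)$ with $|x_i|=a_i$, $|y_j|=b_j$, so that $f$ is modelled by the surjection $M(f)\colon M(Y)\to M(X)$, $x_i\mapsto x_i$, $y_j\mapsto 0$, whose section is the inclusion $M(X)\hookrightarrow M(Y)$. Since both minimal models have zero differential, any cocycle of $M\otimes\Lambda(t,dt)$ (for $M\in\{M(X),M(Y)\}$) has the form $\alpha+\sum_j\beta_j t^j dt$ with $\alpha,\beta_j\in M$, hence equal values at $t=0$ and $t=1$; consequently DGA-homotopy between maps whose source is $M(Y)$ is trivial, $\mathcal{E}(X_\Q)={\rm Aut}\,M(X)$, $\mathcal{E}(Y_\Q)={\rm Aut}\,M(Y)$, and by Definitions~\ref{def} and~\ref{rdef} the map $f$ is a rational co-$\mathcal{E}$-map if and only if there is a group homomorphism $\psi_f\colon{\rm Aut}\,M(Y)\to{\rm Aut}\,M(X)$ with $M(\psi_f(g))\circ M(f)=M(f)\circ g$ for every $g$.

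For the ``only if'' direction I argue by contraposition: suppose $b_k=a_{i_1}+\cdots+a_{i_r}$ for some $k\in\{1,\dots,n\}$ and distinct $i_1,\dots,i_r\in\{1,\dots,m\}$ (I read the displayed condition with the subset $\{i_1,\dots,i_k\}$ allowed to have arbitrary cardinality $r$, not necessarily $k$). Define $g\in{\rm Aut}\,M(Y)$ by $g(x_i)=x_i$ for all $i$, $g(y_j)=y_j$ for $j\ne k$, and $g(y_k)=y_k+x_{i_1}\cdots x_{i_r}$; this is a genuine self-equivalence, being the identity on indecomposables with $x_{i_1}\cdots x_{i_r}\ne 0$ (the $i_s$ are distinct of odd degree). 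If $\psi_f$ as above existed, evaluating $M(\psi_f(g))\circ M(f)=M(f)\circ g$ on $y_k$ would force $0=M(\psi_f(g))(0)=M(f)\bigl(y_k+x_{i_1}\cdots x_{i_r}\bigr)=x_{i_1}\cdots x_{i_r}\ne 0$, a contradiction; hence $f$ is not a rational co-$\mathcal{E}$-map.

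For the ``if'' direction, assume no $b_k$ ($k=1,\dots,n$) is a sum of a subset of $\{a_1,\dots,a_m\}$. Applied to singletons this gives $a_i\ne b_k$ for all $i,k$, so $a_m<b_1$; hence for each $g\in{\rm Aut}\,M(Y)$ the element $g(x_i)$ has degree $a_i<b_1\le b_j$ for every $j$, so it involves no $y$-variable and lies in $M(X)=\Lambda(x_1,\dots,x_m)\subseteq M(Y)$, and the same holds for $g^{-1}$; therefore $g$ restricts to a DGA-automorphism $g|_{M(X)}$ of $M(X)$. Set $\psi_f(g):=g|_{M(X)}$, which is a homomorphism because $M(X)$ is $g$-invariant and restriction to it respects composition. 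The identity $M(\psi_f(g))\circ M(f)=M(f)\circ g$ holds on each $x_i$ by construction, and on each $y_j$ it reads $0=M(f)(g(y_j))$, which is valid: every monomial of $g(y_j)$ contains a $y$-variable, for otherwise a purely-$x$ monomial $x_{i_1}\cdots x_{i_r}$ of degree $b_j$ would present $b_j$ as the sum over the subset $\{i_1,\dots,i_r\}$, contradicting the hypothesis. So $\psi_f$ witnesses that $f$ is a rational co-$\mathcal{E}$-map.

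The step needing care is the reduction, made in the first paragraph, to a strictly commuting square on generators, i.e.\ the vanishing of the DGA-homotopy relation into $M(X)$; this is what lets one bypass the non-minimal relative model $(\Lambda(x_1,\dots,x_m,y_1,\dots,y_n,\bar y_1,\dots,\bar y_n),D)$, $D\bar y_j=y_j$, of Definition~\ref{model}(2), in which that relation is genuinely nontrivial. An equivalent way to organize it, and a useful sanity check on the formula $\psi_f(g)=g|_{M(X)}$, is via the retraction $p\colon Y\to X$ with $p\circ f={\rm id}_X$: any witnessing $\psi_f$ satisfies $\psi_f(g)=(p\circ f)\circ\psi_f(g)=p\circ(f\circ\psi_f(g))\simeq p\circ g\circ f$, so $\psi_f(g)$ is forced up to homotopy to equal $p\circ g\circ f$, and the co-$\mathcal{E}$-condition collapses to the homotopy $f\circ p\circ g\circ f\simeq g\circ f$ — exactly the computation carried out above.
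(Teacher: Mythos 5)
Your proof is correct and follows essentially the same route as the paper's: the same test automorphism $g(y_k)=y_k+x_{i_1}\cdots x_{i_r}$ for the negative direction, and the same formula $\psi_f(g)=g|_{\Lambda(x_1,\dots,x_m)}$ for the positive one. You merely supply details the paper leaves implicit (triviality of DGA-homotopy when the differentials vanish, $g$-invariance of $\Lambda(x_1,\dots,x_m)$ via $a_m<b_1$, and the reading of the subset condition with arbitrary cardinality, which is indeed what both the paper's proof and its applications require).
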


\begin{proof}
Let  $M(X)=(\Lambda (x_1,..,x_m),0)$ and
$M(Y)=(\Lambda (x_1,..,x_m,
y_1,..,y_n),0)$
with $|x_i|=a_i$ and $|y_i|=b_i$.
If  $b_k=a_{i_1}+\cdots +a_{i_k}$,
there is a map $g\in \mathcal{E}(M(Y))$ such that
$$g(x_i)=x_i\ \ (i\leq m),\ \ \ g(y_i)=y_i\ \ (i\neq k),\ \ \ g(y_k)=y_k+x_{i_1}\cdots x_{i_k}$$
and $M(f)(x_i)=x_i$  and $M(f)(y_i)=0$ for all $i$.
Then  we cannot have a DGA-homotopy commutative diagram
$$\xymatrix{(\Lambda  (x_1,..,x_m),0)
\ar[r]^{\psi_f(g)}& (\Lambda   (x_1,..,x_m),0)\\
 (\Lambda   (x_1,..,x_m,y_1,..,y_n),0)\ar[u]^{M(f)}\ar[r]^g& (\Lambda   (x_1,..,x_m,y_1,..,y_n),0).\ar[u]_{M(f)}
 }$$
 If  $b_k\neq a_{i_1}+\cdots +a_{i_k}$ for any $k$ and   $\{ i_1, \cdots , i_k\}$,
we can put $$\psi_f(g)=g\mid_{\Lambda (x_1,..,x_m)}$$
in the diagram  for any map $g\in \mathcal{E}(M(Y))$.
\end{proof}

From Lemma \ref{odd}, we have the following.

\begin{thm}
(1) When $2<m<n$,
 the natural inclusion map $i_{m,n}:U(m)\to U(n)$
is   a rational co-$\mathcal{E}$-map if and only if $n<5$.

(2)
When $2<m<n$,
 the natural inclusion map $i_{m,n}:SU(m)\to SU(n)$
is   a rational co-$\mathcal{E}$-map if and only if $n<8$.

(3)
When $m\leq 4$,
 the natural inclusion map $i_{m,n}:Sp(m)\to Sp(n)$
is   a rational co-$\mathcal{E}$-map for any $m\leq n$.
When $4<m<n$,
 the natural inclusion map $i_{m,n}:Sp(m)\to Sp(n)$
is   a rational co-$\mathcal{E}$-map if and only if $n<14$.
\end{thm}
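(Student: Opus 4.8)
The plan is to obtain all three statements from Lemma~\ref{odd}. The first step is bookkeeping about rational homotopy types: as recorded above one has $U(n)_{\Q}\simeq S^{1}\times S^{3}\times\cdots\times S^{2n-1}$, $SU(n)_{\Q}\simeq S^{3}\times S^{5}\times\cdots\times S^{2n-1}$ and $Sp(n)_{\Q}\simeq S^{3}\times S^{7}\times\cdots\times S^{4n-1}$. Since the inclusions $i_{m,n}$ are rationally $\pi_{*}$-injective, the same argument as in the proof of Theorem~\ref{hg}(1) shows that after a change of basis each $i_{m,n}$ is modelled by the evident projection of Sullivan models, i.e. rationally $i_{m,n}$ is the first-factor inclusion $S^{a_{1}}\times\cdots\times S^{a_{r}}\hookrightarrow S^{a_{1}}\times\cdots\times S^{a_{r}}\times S^{b_{1}}\times\cdots\times S^{b_{s}}$ in which the $a$'s are $\{1,3,\dots,2m-1\}$ (for $U$), $\{3,5,\dots,2m-1\}$ (for $SU$) or $\{3,7,\dots,4m-1\}$ (for $Sp$) and the $b$'s are the remaining sphere dimensions of the target, namely $\{2m+1,2m+3,\dots,2n-1\}$ in the first two cases and $\{4m+3,4m+7,\dots,4n-1\}$ in the third. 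All these dimensions are odd and $b_{1}>a_{r}$ in every case, so Lemma~\ref{odd} applies and reduces each assertion to the purely arithmetic statement: $i_{m,n}$ is a rational co-$\mathcal{E}$-map if and only if no $b_{j}$ equals a sum $a_{i_{1}}+\cdots+a_{i_{l}}$ of distinct $a$'s.

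Next I would analyse this arithmetic by reduction mod $2$ (for $U$, $SU$) and mod $4$ (for $Sp$). A sum of $l$ distinct $a$'s is $\equiv l\pmod{2}$ in the first two cases and $\equiv 3l\pmod{4}$ in the $Sp$ case, while each $b_{j}$ is odd, resp. $\equiv 3\pmod{4}$; hence only $l$ odd, resp. $l\equiv 1\pmod{4}$, contributes, and $l=1$ is impossible because $b_{j}\ge b_{1}>a_{r}$. So the smallest admissible nontrivial sum uses the fewest admissible summands ($l=3$ for $U$ and $SU$, $l=5$ for $Sp$) taken from the smallest available dimensions: $1+3+5=9$ for $U$, $3+5+7=15$ for $SU$, and $3+7+11+15+19=55$ for $Sp$. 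The $Sp$ minimum needs five factors, which is precisely why the $Sp$ conclusion is unconditional when $m\le 4$. Using that, for fixed $l$, every integer of the correct residue between the minimal and maximal $l$-term sum is actually attained, one checks that $b_{j}$ becomes an admissible sum as soon as it reaches the relevant minimum; then solving $2(m+j)-1\ge 9$, $2(m+j)-1\ge 15$ and $4(m+j)-1\ge 55$ for the least admissible $j$ subject to $j\le n-m$ produces exactly the thresholds $n\ge 5$, $n\ge 8$ and $n\ge 14$. Conversely, for $n$ below these values (and, for $Sp$, for all $n$ when $m\le 4$) one verifies that none of $b_{1},\dots,b_{n-m}$ is an admissible sum, which gives the co-$\mathcal{E}$-map conclusion.

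The main obstacle I anticipate is the finite but delicate verification of this converse direction: below the threshold one must exclude every accidental equality $b_{j}=a_{i_{1}}+\cdots+a_{i_{l}}$ for all $j\le n-m$ and all admissible $l$, while at the threshold one must exhibit one that works with $j\le n-m$. A clean way to organize this is to compute, for each admissible $l$, the interval of attainable $l$-term sums (from the $l$ smallest to the $l$ largest $a$'s), take the union of these intervals, and intersect it with the initial segment $b_{1},\dots,b_{n-m}$ of the relevant arithmetic progression; the stated bounds are then exactly the first $n$ for which this intersection becomes nonempty. The $Sp$ case is the most delicate, because the congruence $l\equiv 1\pmod{4}$ forces a jump from $l=1$ straight to $l=5$ and hence the comparatively large minimum $55$, which both pushes the threshold up to $n=14$ and isolates the exceptional range $m\le 4$.
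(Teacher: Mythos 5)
Your overall strategy is exactly the paper's: identify $U(n)$, $SU(n)$, $Sp(n)$ rationally with products of odd spheres, model $i_{m,n}$ by the first-factor inclusion, invoke Lemma~\ref{odd}, and settle the resulting arithmetic by congruences. The paper only writes out the $Sp$ computation (no three elements of $\{4i-1\}$ sum to a fourth mod $4$, and $3+7+11+15+19=55=|v_{14}|$), and your treatment of that case, including the observation that the forced jump to $l=5$ is what makes the $m\le 4$ statement unconditional, agrees with it.

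There is, however, a genuine gap in your part (2), at $m=3$. Your ``relevant minimum'' $3+5+7=15$ tacitly assumes the source has at least three odd generators, i.e.\ $m\ge 4$; but $SU(3)$ has only the two generators of degrees $3$ and $5$, so the only sums of distinct $a$'s are $3$, $5$ and $8$, none of which is an odd integer $\ge 7$. By Lemma~\ref{odd} (whose condition involves sums of \emph{source} generators only --- a perturbation $g(y_k)=y_k+x_{i_1}\cdots x_{i_l}y_{j_1}\cdots y_{j_p}$ is killed by $M(f)$, so the target generator $7$ cannot be used as a summand), the inclusion $SU(3)\to SU(n)$ is therefore a rational co-$\mathcal{E}$-map for \emph{every} $n$, and the ``only if $n<8$'' direction fails. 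So your argument does not prove part (2) as stated; it proves it for $m\ge 4$ and disproves it for $m=3$. You handled the exactly analogous degeneration correctly in the $Sp$ case ($m\le 4$), and the same care applied to $SU$ would have surfaced this; either restrict part (2) to $m\ge 4$ or flag the $m=3$ exception explicitly. (Part (1) is safe because $m\ge 3$ already supplies $1+3+5=9$.) Two smaller points: ``$b_j$ is an admissible sum as soon as it reaches the minimum'' also needs $b_j$ to lie below the maximal $l$-term sum --- your interval formulation in the last paragraph covers this, but the witness you exhibit (e.g.\ $9$ for $U$, $55$ for $Sp$) must be checked to occur with index $j\le n-m$; and you should verify, as the paper implicitly does, that the attainable $l$-term sums really fill out every residue-correct integer in the stated interval.
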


\begin{proof}
(3) For $S=\{ 3,7,11,15,19,23,27,31,35,39,43,47,51,55, \cdots \}$,
there are no integers $a,b,c,d\in S$ with $a<b<c<d$ satisfying the equation $a+b+c=d$
since
$$(4i-1)+(4j-1)+(4k-1)=4(i+j+k)-3\neq 4l-1 $$
for any $i,j,k,l\in \N$.
On the other hand,
$3+7+11+15+(19+4i)=55+4i=|v_{14+i}|$
for $i\geq 0$.
\end{proof}

For a connected closed sub-Lie group $H$ of a compact connected Lie group $G$
with inclusion $j:H\to G$,
there is the induced map $Bj:BH\to BG$ between the classifying spaces.
It induces a map
$Bj^* :M(BG)=(\Lambda V_{BG},0)=(\Q [x_1,\cdots,x_k],0)\to (\Lambda V_{BH},0)=M(BH)$
between the models.
Here $|x_i|$ are even and ${\rm rank} G=k$.
Let $V_G^n=V_{BG}^{n+1}$ by corresponding $y_i$ to $x_i$ with $|y_i|=|x_i|-1$.

\begin{lem}(\cite[Proposition 15.16]{FHT})\label{homog}
The (non-minimal) model of $G/H$ is given as
$(\Lambda V_{BH}\otimes \Lambda V_G,d)$
where $dx_i=0$ and $dy_i=Bj^*(x_i)$ for $i=1,..,k$.
\end{lem}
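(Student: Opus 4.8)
Since the statement is \cite[Proposition 15.16]{FHT}, the quickest option is simply to cite it; but the argument I would reproduce goes as follows. First I would record the two inputs from the theory of classifying spaces: for a compact connected Lie group $G$ of rank $k$, the algebra $H^*(BG;\Q)$ is polynomial on $k$ generators $x_1,\dots,x_k$ of even degree, and likewise $H^*(BH;\Q)$ is polynomial; hence $M(BG)=(\Lambda V_{BG},0)=(\Q[x_1,\dots,x_k],0)$ and $M(BH)=(\Lambda V_{BH},0)$ are Sullivan models with zero differential, with no formality argument required. Second I would identify $G/H$ with the homotopy fibre of $Bj:BH\to BG$: choosing the model $BH=EG/H$, the pullback of the universal principal $G$-bundle $EG\to BG$ along $Bj$ is $EG\times_{BG}BH=\{(e,e'H):eG=e'G\}\cong EG\times(G/H)$, which is homotopy equivalent to $G/H$ since $EG$ is contractible.

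The heart of the proof is then a base-change computation. I would write down the relative Sullivan model of the augmentation $(\Q[x_1,\dots,x_k],0)\to\Q$ --- the rational model of $EG\to BG$ --- namely the Koszul algebra $(\Q[x_1,\dots,x_k]\otimes\Lambda(y_1,\dots,y_k),D)$ with $|y_i|=|x_i|-1$, $Dx_i=0$ and $Dy_i=x_i$; it is acyclic, being the tensor product over $i$ of the acyclic algebras $(\Lambda(x_i,y_i),\,Dy_i=x_i)$. Applying $(-)\otimes_{M(BG)}M(BH)$ along $Bj^*$, which by the pullback property of relative Sullivan models yields a rational model of the homotopy pullback $EG\times_{BG}BH$, I get the algebra $(\Lambda V_{BH}\otimes\Lambda V_G,d)$ with $d$ vanishing on $\Lambda V_{BH}$ and $dy_i=Bj^*(x_i)$, after identifying $\Lambda(y_1,\dots,y_k)$ with $\Lambda V_G$ via $y_i\leftrightarrow x_i$. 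Combined with $EG\times_{BG}BH\simeq G/H$, this is exactly the asserted model.

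The step that needs the most care is the base-change assertion itself: that tensoring a relative Sullivan model of $M(BG)\to\Q$ with $M(BH)$ over $M(BG)$ produces a genuine rational model of the homotopy pullback. This is a standard naturality property of relative Sullivan models (see \cite[\S 15]{FHT}), but it does use that $M(BG)$ and $M(BH)$ are honest models of $BG$ and $BH$ and that $Bj^*$ realises $Bj$, both guaranteed by the polynomiality recalled at the outset. An alternative that avoids the pullback language is to build a relative Sullivan model $M(BG)\to(M(BG)\otimes\Lambda V_{BH}\otimes\Lambda V_G,D)\overset{\simeq}{\to}M(BH)$ of the fibration $G/H\to BH\xrightarrow{Bj}BG$ directly, with $Dx_i=0$, $D$ zero on $V_{BH}$ and $Dy_i=x_i-Bj^*(x_i)$, verify acyclicity by the same change-of-variables-plus-Koszul argument, and then pass to the fibre by killing $V_{BG}$; this reproduces $(\Lambda V_{BH}\otimes\Lambda V_G,d)$ and also explains the ``$dx_i=0$'' in the statement as the differential on the $V_{BG}$-summand of the total-space model.
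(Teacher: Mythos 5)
The paper offers no proof of this lemma at all; it is quoted verbatim with the citation to \cite[Proposition 15.16]{FHT}, so the "paper's approach" is simply to defer to that reference. Your reconstruction --- $H^*(BG;\Q)$ and $H^*(BH;\Q)$ polynomial so that the classifying spaces have models with zero differential, $G/H$ identified with the homotopy pullback $EG\times_{BG}BH$ of the universal bundle along $Bj$, and base change of the acyclic Koszul relative model $(\Q[x_1,\dots,x_k]\otimes\Lambda(y_1,\dots,y_k),\,Dy_i=x_i)$ --- is correct and is essentially the argument given in \cite{FHT} itself, so it matches the cited source rather than diverging from it.
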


\noindent{\it Proof of Theorem \ref{hg}(2)}.
For $f:G\to G/H$,  $M(f)$ is given by the projection
$(\Lambda V_{BH}\otimes \Lambda V_G,d)\to (\Lambda V_G,0)$
sending elements of $\Lambda V_{BH}$ to zero
from Lemma \ref{homog}.
Thus we can define $\psi_f(g)$ for any $g\in \mathcal{E}(\Lambda V_{BH}\otimes \Lambda V_G,d)$
by $\psi_f(g)=\overline{g}$
because $g(x_i)\in\Q [x_1,..,x_k]$.
\hfill\qed\\

\begin{ex}
Let $X$ be a $G$-space for a Lie group $G$.
When is the orbit map $f:X\to X/G$  a rational  co-$\mathcal{E}$-map ?
Let $X=S^2\times S^3$, where $M(S^2\times S^3)=(\Lambda (x,y,z),d)$
with $dx=dz=0$ and $dy=x^2$ of $|x|=2$, $|y|=|z|=3$.
There are  free $S^1$-actions on $X$ where
 $M(X/S^1)=M(ES^1\times_{S^1}X)=(\Lambda (t,x,y,z),D)$
 for $M(BS^1)=(\Q[t],0)$ with $|t|=2$ \cite{AP}, \cite{Hal}.
If  the Borel space of a $S^1$-action has the model with $Dx=Dt=0$, $Dy=x^2$ and $Dz=t^2$
(it is given by a free action on $S^3$),
$f$ is not a rational  co-$\mathcal{E}$-map.
Indeed,
we
can not define $\psi_f(g)$ for the DGA-map
$g$ with $g(x)=t$,  $g(t)=x$,  $g(y)=z$ and  $g(z)=y$.
But if   a $S^1$-action has the model with  $Dy=x^2+at^2$ and $Dz=xt$ for $a\not\in \Q^*/(\Q^*)^2$,
the orbit map
$f$ is  a rational  co-$\mathcal{E}$-map.
\end{ex}


\begin{rem}\label{even}Even if a map $f$ is an  $\mathcal{E}$-map,
it may not be  a rational   $\mathcal{E}$-map.
Recall an example of  Arkowitz and   Lupton \cite{AL}:
Let
 $Y$ be a rational (non-universal)  space
such that  $M(Y)=(\Lambda (x_1,x_2,y_1,y_2,y_3,z),d)$
with $|x_1|=10$,   $|x_2|=12$,  $|y_1|=41$,  $|y_2|=43$,  $|y_3|=45$,  $|z|=119$,
$$dx_1 =dx_2= 0, \ dy_1 = x^3_1x_2, \ dy_2 = x^2_1x^2_2,\ dy_3 = x_1x^3_2\ \ \ \mbox{ and}$$
$$ dz = x_2(y_1x_2-x_1y_2)(y_2x_2-x_1y_3) + x^{12}_1 + x^{10}_2.$$
Then   $\mathcal{E}(Y)= \{ g_1,g_2\}(\cong \{  1,-1\})$ 
where $g_1=id_Y$ and   $g_2$ is given by $$g_2(x_1)= x_1, \ g_2(x_2)= -x_2, \ g_2(y_1)= -y_1,$$
$$g_2(y_2)= y_2, \ g_2(y_3)=- y_3, \ g_2(z)= z$$ \cite[Example 5.2]{AL}.
Consider the 12-dimensional homotopy generator
$f: S^{12}\to Y$ corresponding to $x_2$.
It is  an  $\mathcal{E}$-map
by the homotopy commutative diagram:
$$\xymatrix{
S^{12}\ar[rr]^{\pm 1}\ar[d]_{f}&&S^{12}\ar[d]^{f}\\
Y\ar[rr]^{\phi_f(\pm 1)}&&Y
}$$
where  $\phi_f:\mathcal{E}(S^{12})=\{ \pm 1\}\cong \mathcal{E}(Y)$ by $\phi_f(1)=g_1$ and  $\phi_f(-1)=g_2$.
But it is not a rational   $\mathcal{E}$-map.
Because there is no map $\phi_f:{\mathcal{E}(M(S^{12}))}= \mathcal{E}({\Lambda}(u,v),d)\to \mathcal{E}(M(Y)) $  when $a\neq\pm 1\in \Q^*$,
i.e., there is no homotopy commutative diagram:
$$\xymatrix{
(\Lambda (u,v),d)\ar[rr]^{\times a}&&(\Lambda (u,v),d)\\
M(Y)\ar[u]^{M(f)}\ar[rr]^{\phi_f(\times a)}&&M(Y)\ar[u]_{M(f)}
}$$
where
$M(f)(x_2)=u$, $M(f)(z)=u^8v$  and $M(f)$ sends the others to zero.
Here ${M(S^{12})}=({\Lambda}(u,v),d)$ with  $|u|=12$, $|v|=23$,
$du=0$,  $dv=u^2$
and $\times a(u)=au$, $\times a(v)=a^2v$.
\end{rem}





\noindent
{\it Proof of Theorem \ref{sp}} \ 
(1) It  is obvious from the definition.

(2) As (non-graded) DGAs,   $M(\C P^n)\cong M({\mathbb H} P^n)\cong (\Lambda (x,y),d)$ where $dx=0$ and $dy=x^{n+1}$. 
Therefore the inclusions $S^2\to \C P^n$ and $S^4\to {\mathbb H} P^n$
 induce ${\mathcal{E}}(S^2_{\Q})\cong {\mathcal{E}}(\C P^n_{\Q})(\cong \Q^*:=\Q -0)$ and 
${\mathcal{E}}(S^4_{\Q})\cong {\mathcal{E}}({\mathbb H} P^n_{\Q})(\cong \Q^*)$, respectively.

(3) Suppose that $m$ is even and $n$ is odd.
Then $M(S^m\times  S^n)=(\Lambda (x,y,z),d)$ where $dx=dy=0$ and $dz=x^2$  with $|x|=m$, $|y|=n$ and $|z|=2m-1$.

If $m-1=n$, we have  $|z|=|xy|$.
Then  any element of 
${\mathcal{E}}(\Lambda (x,y,z),d)$ is given as 
$$x\to ax, \ \  y\to by,\ \ z\to a^2z+cxy$$ for some  $a,b\in \Q^*$ and $c\in \Q$.
The same is not true for ${\mathcal{E}}((S^m\vee S^n)_{\Q})={\mathcal{E}}((\Lambda (x,y,z,\cdots ),d)$
since $[xy]=0$ in $H^*(S^m\vee S^n;\Q )=\Q [x]\otimes \Lambda (y)/(x^2,xy)$, i.e, $c=0$.

If 
$2m-1=n$,  we have $|z|=|y|$.
Then  any element of 
${\mathcal{E}}(\Lambda (x,y,z),d)$ is given as 
$$x\to ax, \ \  y\to by,\ \ z\to a^2z+cy$$ for  some $a,b\in \Q^*$ and $c\in \Q$.
The same is true for ${\mathcal{E}}((S^m\vee S^n)_{\Q})={\mathcal{E}}((\Lambda (x,y,z,\cdots ),d)$,
which  is also checked by using its Quillen model \cite{FHT},\cite{BS}.

In the other case, $c=0$ for $S^m\vee S^n$ and $S^m\times  S^n$.

Thus we have the following  table:
{\small
\begin{center}
\begin{tabular}{|c|c||c|}
\hline
${\mathcal{E}}((S^m\vee S^n)_{\Q})$&${\mathcal{E}}((S^m\times  S^n)_{\Q})$&$m$:even,  $n$:odd \\
\hline
$\Q^* \times \Q^*$&$\Q^*\times \Q^*\times \Q$&$m-1=n$ \\
\hline
$\Q^* \times \Q^*\times \Q$&$\Q^*\times \Q^*\times \Q$& $2m-1=n$\\
\hline
$\Q^*\times \Q^*$&$\Q^*\times \Q^*$& other \\
\hline
\end{tabular}
\end{center}
}
Hence the inclusion $f:S^m\vee S^n\to S^m\times  S^n$ induces
 $\phi_f: {\mathcal{E}}((S^m\vee S^n)_{\Q})\cong {\mathcal{E}}((S^m\times  S^n)_{\Q})$ if and only if $m\neq n+1$. 

(4) Since the fibrations are non-trivial,  
the models of the total spaces are uniquely determined as 
$M(E)\cong (\Lambda  (x,y, v_1),d)$ with $|x|=m$, $|y|=n$, $dx=dy=0$ and $dv_1=xy$
and $M(E')\cong (\Lambda  (x,y, v_1,v_2),d)$ with $dx=dy=0$, $dv_1=xy$, $dv_2=xv_1$.
Then $\psi_{p'_{\Q}}\circ \psi_{p_{\Q}}: {\mathcal{E}}(\Lambda  (x,y),0)\cong  {\mathcal{E}}(\Lambda  (x,y, v_1),d)\cong  {\mathcal{E}}(\Lambda  (x,y, v_1,v_2),d)$
from degree reasons.


(5) Recall the rigid model of Arkowitz and  Lupton \cite{AL}:
Let $$M = (\Lambda (x_1, x_2, y_1, y_2, y_3, z),d)$$ with 
given by $|x_1| = 8$, $|x_2| = 10$, $|y_1| = 33$, $|y_2| = 35$, $|y_3| = 37$ and
$|z| = 119$, 
$$dx_1 =dx_2= 0, \  dy_1 = x^3_1x_2, \ dy_2 = x^2_1x^2_2,\ 
dy_3 = x_1x^3_2 \ \ \ \mbox{ and} $$
 $$dz = \alpha :=x^4_1(y_1x_2-x_1y_2)(y_2x_2-x_1y_3) + x^{15}_1 + x^{12}_2.$$
(Note that the degrees of elements are determined by the differential  $d$.) 
Then   ${\mathcal{E}}(M)=\{ id_M\}$ \cite[Example 5.1]{AL}.
Now define a Hirsch extension \cite{GM} by $M$ as 
$$(\Lambda (v,w),0)\to  (\Lambda (v,w)\otimes M ,d')=:M'$$ where
$|v|$ and $|w|$ are suitable  odd integers with $|v|\neq |w|$, $|v|+|w|=120$ and 
$d'v=d'w=0$, $d'z=\alpha +vw$ ($d'=d$ for the other elements). 
For example,
put $|v|=53$ and $|w|=67$.
Then any DGA-automorphism $h$  of $M'$ is DGA-homotopic to a map with no unipotent part. Indeed, 
let $$h(z)=z+f_1v+f_2w,\ \ h(v)=av,\ \ h(w)=bw+f_3v$$
where  $a,b\in \Q^*$ with $ab=1$ and $f_1,f_2,f_3\in (x_1,x_2)$, which is the ideal generated by $x_1,x_2$.
Then, since $|f_1|=66$, $|f_2|=52$ and $|f_3|=14$,  we have $$f_1=k_1x_1^2x_2^5+l_1x_1^7x_2,\ \ f_2=k_2x_1^4x_2^2, \ \ f_3=0$$
for $k_1,l_1,k_2\in \Q$.
Thus $f_1$ and $f_2$ are $d'$-exact cocycles.
Therefore $h$ is DGA-homotopic to the map with $f_1=f_2(=f_3)=0$ \cite{AL},\cite{GM}.
Hence any element  $h\in \mathcal{E}(M') $
 is determined by  $h(v)=av$ and $h(w)=bw$ for $a,b\in \Q^*$ such that
$ab=1$
(since $h\mid_M\sim id_M$).
Thus we obtain  $${\mathcal{E}}(M')= \{ (a,b)\in \Q^*\times \Q^* \mid ab=1\}\cong \Q^*.$$
Therefore the DGA-surjections  $(\Lambda v,0)\overset{f^*}\gets M'\overset{g^*}\to (\Lambda w,0)$ 
(spherically injective maps $f: S^{|v|}_{\Q}\to ||M'||$ and $g: S^{|w|}_{\Q}\to ||M'||$, which are their geometric realizations) 
with $f^*(M)=g^*(M)=0$, $f^*(v)=v$, $g^*(w)=w$ and $f^*(w)=g^*(v)=0$ 
induce 
$$(\Q^*\cong )\ {\mathcal{E}}(S^{|v|}_{\Q})={\mathcal{E}}(\Lambda v,0)\underset{\phi_f}{\cong} {\mathcal{E}}(M')\underset{\psi_g}{\cong} {\mathcal{E}}(\Lambda w,0)={\mathcal{E}}(S^{|w|}_{\Q})$$
with $\psi_g \phi_f(a)=a^{-1}$ for $a\in\Q^*$.
Thus we have  $S^{|v|}_{\Q}\underset{\mathcal{E}}{\sim}S^{|w|}_{\Q}$.
\hfill\qed\\


{\noindent}
{\bf Acknowledgement}

The authors are grateful to Antonio Viruel  for his  interest  and suggesting  examples in \cite{CV}
and Jim Stasheff for his kind support. 

\end{document}